\newtheorem{thm}{Theorem}[section]
\newtheorem{prop}[thm]{Proposition}
\newtheorem{lemma}[thm]{Lemma}
\theoremstyle{definition}
\newtheorem{definition}[thm]{Definition}
\newtheorem*{question}{Question}
\theoremstyle{remark}
\DeclareMathOperator{\codim}{codim}
\DeclareMathOperator{\Rad}{Rad}
\DeclareMathOperator{\rk}{rk}
\DeclareMathOperator{\LG}{LG}
\DeclareMathOperator{\Gr}{Gr}
\newcommand{\oomega}{\overline{\omega}}
\def\dd{\textrm{d}}
\def\N{\mathbb{N}}
\def\Z{\mathbb{Z}}
\def\Q{\mathbb{Q}}
\def\R{\mathcal{R}}
\def\C{\mathbb{C}}
\def\PP{\mathbb{P}}
\def\A{\mathcal{A}}
\def\S{\mathcal{S}}
\newcommand{\defeq}{\stackrel{\textnormal{def}}{=}}
\newcommand*{\bigcdot}{%
  {\mathbin{\mathpalette\bigcdot@{}}}%
}
\newcommand*{\bigcdot@scalefactor}{.75}
\newcommand*{\bigcdot@widthfactor}{1.4}
\newcommand*{\bigcdot@}[2]{%
  % #1: math style
  % #2: unused
  \sbox0{$#1\vcenter{}$}% math axis
  \sbox2{$#1\cdot\m@th$}%
  \hbox to \bigcdot@widthfactor\wd2{%
    \hfil
    \raise\ht0\hbox{%
      \scalebox{\bigcdot@scalefactor}{%
        \lower\ht0\hbox{$#1\bullet\m@th$}%
      }%
    }%
    \hfil
  }%
}
\title{Two Examples of Toric Arrangements}
\author{Roberto Pagaria}
\address{Roberto Pagaria}
\address{\textup{Scuola Normale Superiore\\ Piazza dei Cavalieri 7, 56126 Pisa\\ Italia}}
\email{roberto.pagaria@sns.it}
\begin{document}

\begin{abstract}
We show that the integral cohomology algebra of the complement of a toric arrangement is not determined by the poset of layers.
Moreover, the rational cohomology algebra is not determined by the arithmetic matroid (however it is determined by the poset of layers).
\end{abstract}

\maketitle

\section*{Introduction}
A toric arrangement is a finite collection of hypertori lying in an algebraic torus.
The complement of a toric arrangement is an open set: we are interested in studying its cohomology ring.
The combinatorics of an arrangement is encoded in its poset of layers, i.e.\ the poset of connected components of the intersections of some hypertori.
In the case of hyperplane arrangements, the associated poset we consider is the poset of intersections, which turns out to be a geometric lattice.
The combinatorial data of a central hyperplane arrangement can be stored equivalently in an another structure: a matroid.
There are several equivalent definitions of matroid, we point to \cite{Oxley} for a general reference.
Matroids can be generalized to the toric case in different ways:
arithmetic matroids (see \cite{DAdderio-Moci} and \cite{Branden-Moci}), matroids over rings (see \cite{Fink-Moci}) and $G$-semimatroids (see \cite{Delucchi-Riedel}).
All these combinatorial data permit us to define the arithmetic Tutte polynomial (introduced first in \cite{Moci}), that is the analogous of the Tutte polynomial of a hyperplane arrangement.

The study of interplay between the cohomology of the complement of a toric arrangement and its combinatorics started in \cite{Looijenga}, where the Betti numbers of the complement were computed: it follows that the Poincaré polynomial is a specialization of the arithmetic Tutte polynomial (see \cite{Moci}).
The cohomology algebra of the complement of a hyperplane arrangement is the cohomology of the algebraic de Rham complex (see \cite{Grothendieck}) that  was combinatorially described in \cite{OS80}.
An analogous approach in the toric case started with \cite{DP05} (see also \cite{DPbook}), where a description of the associated graded cohomology algebra with complex coefficients $\Gr H(M(\A),\C)$ was given.
The filtration used in \cite{DP05} coincides with the Leray filtration for the inclusion of the complement in the ambient torus.
The graded algebra with rational coefficients $\Gr H(M(\A),\Q)$ can be obtained from the Leray spectral sequence as shown in \cite{Bibby} and \cite{DupontHypersurfaces}.
The graded algebra with integer coefficients $\Gr H(M(\A),\Z)$ was studied in \cite{Callegaro-Delucchi} and from the combinatorial point of view, in \cite{Pagaria}.
Recently, presentations of the cohomology algebras $H(M(\A),\Q)$ and $H(M(\A),\Z)$ in the spirit of \cite{OS80} was obtained in \cite{CDDMP}, generalizing \cite{DP05}.
The description of the rational cohomology ring $H(M(\A),\Q)$ depends only on the poset of layers.

\medskip

In \Cref{sect:first_example} we show that the integral cohomology algebra $H(M(\A),\Z)$ of the complement of a (central) toric arrangement is not combinatorial, i.e.\ it does not depend only on the poset of layers (\Cref{thm:main_integer}).
This example gives a negative answer to Question 7.3.1 of \cite{Callegaro-Delucchi}.

In \cref{sect:second_example}, we show that arithmetic matroids and matroids over $\Z$ contain less information than the poset of layers.
Indeed, we build two central toric arrangements with the same arithmetic matroid, the same matroid over $\Z$, but with non-isomorphic posets of layers (\Cref{thm:main_combin}) and non-isomorphic cohomology algebra with rational coefficients.
As consequence, there cannot exist a ``cryptomorphism'' between arithmetic matroids (respectively, matroids over $\Z$) and any class of posets such that -- in the representable cases -- the poset associated with the matroid coincides with the poset of layers of any representation.

The following question about central toric arrangements remains open.
\begin{question}
Does the integral cohomology algebra of the complement of a central toric arrangement determine the toric arrangement?
\end{question}

\section{Definitions}
Let $N=(n_{i,j})$ be a matrix with integer coefficients of size $r \times n$.

\begin{definition}
The \textit{central toric arrangement} $\A$ defined by $N$ is the collection of $n$ hypertori $H_1, \dots, H_n$ in $T=(\C^*)^r$, where
\[ H_j \defeq \left\{ (x_1, \dots x_r) \in (\C^*)^r \mid x_1^{n_{1,j}} x_2^{n_{2,j}} \cdots x_r^{n_{r,j}}=1 \right\}. \]
\end{definition}
We are interested in studying the topological invariants of the \textit{complement} \[M(\A) \defeq T \setminus \bigcup_{H \in \A} H\]
of a toric arrangement $\A$.

We work only with central toric arrangements.
The centrality of the arrangements will be understood.

\begin{definition}
Let $\A=\{H_1, \dots, H_n\}$ be a toric arrangement.
A \textit{layer} $W$ of $\A$ is a connected component of the intersection of some hypertori in $\A$.
The \textit{poset of layers} $\S(\A)$ of the toric arrangement $\A$ is the partially ordered set whose elements are all the layers of $\A$ ordered by reverse inclusions.
The poset of layers is ranked by the codimension in $T$
\[ \rk W = \codim_{T} W.\]
\end{definition}

%\begin{remark}
%The poset of layers does not coincide with the poset of torsions defined in \cite{Martino}.
%\end{remark}

For each pair of layers $(W_1,W_2)$ there exists a unique \textit{meet} $W_1 \wedge W_2$ of the two layers, i.e.\ the minimal layers containing both $W_1$ and $W_2$.
However, a \textit{join} %\footnote{A join of $W_1$ and $W_2$  is $W_1 \vee W_2$, an element greater of both $W_1$ and $W_2$  and minimal with this property.} 
of two elements %may not exist (e.g. if the two tori do not intersect) or it 
may not be unique (e.g. if the intersection of the two tori is not connected).

In the final part of this paper we will need the notions of arithmetic matroids and of matroids over $\Z$ (also known as $\Z$-matroids).
A \textit{matroid} is a pair $(E,\rk)$ where $E$ is a finite set and $\rk : \mathcal{P}(E) \rightarrow \N$ is a \textit{rank function} that satisfies:
\begin{enumerate}
\item $\rk(S) \leq |S|$ for all $S\subseteq E$,
\item $\rk(S) \leq \rk(T)$ for all $S \subseteq T \subseteq E$,
\item $\rk(S\cap T)+ \rk(S\cup T) \leq \rk(S)+\rk(T)$ for all $S,T \subseteq E$.
\end{enumerate}
%The latter inequality is called \textit{submodularity}.

An \textit{arithmetic matroid} is a matroid $(E,\rk)$ with a \textit{multiplicity function} $m:\mathcal{P}(E) \rightarrow \N\setminus \{0\}$ that satisfies five properties (listed for instance in \cite{DAdderio-Moci} or \cite{Branden-Moci} for the definition).

A \textit{matroid over} $\Z$ is a ground set $E$ together with a $\Z$-module $M(S)$ for each subset $S$ of $E$ such that these modules satisfy some specific relations (see \cite{Fink-Moci} for the definition).

We will give the definition of arithmetic matroids and matroids over $\Z$ only in the representable cases, since we deal only with the representable ones.

Let $N\in M(r,n;\Z)$ be an integer matrix.
We denote with $N[S]$, for $S\subseteq [n]$, the sub-matrix of $N$ made up with the columns indexed with $S$.
The \textit{representable matroid over} $\Z$ described by $N$ is the function $\mathcal{M}$ that associate to every subset $S\subseteq [n]$ the module $\Z^n/ \langle N[S] \rangle$, where $\langle N[S] \rangle$ is the sub-module generated by the columns of $N[S]$.
The \textit{representable arithmetic matroid} described by $N$ is $([n],\rk,m)$, where $\rk(S)= \operatorname{rank} \langle N[S] \rangle$ and  $m(S)= |\operatorname{tor}(\Z^n/ \langle N[S] \rangle )|$ is the cardinality of the torsions of the quotient.

In general matroids over $\Z$ contain more information than the corresponding arithmetic matroid.
However, representable matroids with $m(E)=m(\emptyset)=1$ determine the isomorphisms classes of the torsion subgroups $\operatorname{tor}(\Z^n/ \langle N[S] \rangle)$ for all $S\subset E$.
In this case the arithmetic matroid determines the matroid over $\Z$.

\begin{definition}
A toric arrangement is \textit{totally unimodular} if all intersections of some hypertori are connected.
\end{definition}

We recall the presentation of the cohomology ring $M(\A)$, % given in \cite{CDDMP}.
it will be useful later.
For each hypertorus $H_i \in \A$ with equation $\underline{t}^{\underline{n}^i}=1$ we define the differential forms $\omega_i= \frac{1}{2 \pi \sqrt{-1}}\dd \log (1- \underline{t}^{\underline{n}^i})$ and $\psi_i= \frac{1}{2 \pi \sqrt{-1}} \dd \log (\underline{t}^{\underline{n}^i})$.
For the sake of notation we assume a total order on $\A$ and a defining matrix $N=(\underline{n}^i)_{i=1,\dots,n} \in M(r,n;\Z)$.
\begin{thm}\label{lemma3.2}
Let $\A=\{H_1, \dots, H_n\}$ be a totally unimodular toric arrangement.
The cohomology $H^\bigcdot(M(\A);\Z)$ is generated in degree one by the differential forms $\omega_i$, $\psi_i$.
The relations are:
\begin{itemize}
\item $\omega_i \psi_i=0$ for every $H_i\in \A$,
\item for every $I\subseteq \A $ and $c_i \in \{\pm 1\}$ such that $\sum_{H_i\in I} c_i \underline{n}^i=0$, a relation
\[\sum_{H_i \in I} c_i \psi_i =0,\]
\item for every $I\subseteq \A $ and $c_i \in \{\pm 1\}$ such that $\sum_{H_i\in I} c_i \underline{n}^i=0$, a relation
\[\prod_{H_i \in I} (\omega_i-\omega_{i-1}+ c_i \psi_{i-1} ) = 0.\]
\end{itemize}
\end{thm}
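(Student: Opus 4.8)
The plan is to prove this as the integral refinement, due to \cite{CDDMP}, of the De Concini--Procesi presentation of $\Gr H^\bullet(M(\A);\C)$ from \cite{DP05}: first exhibit the generators and the three families of relations, then show by induction on the number of hypertori that these classes generate the ring and that no further relations are needed. The first half is essentially formal. The class $\psi_i$ is the pullback of the canonical generator $\frac{1}{2\pi\sqrt{-1}}\dd\log z$ of $H^1(\C^\ast;\Z)$ along the character $\underline{t}^{\underline{n}^i}\colon T\to\C^\ast$, and $\omega_i$ is the pullback of the same class along $1-\underline{t}^{\underline{n}^i}\colon M(\A)\to\C^\ast$; both are closed integral $1$-forms and so define classes in $H^1(M(\A);\Z)$. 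The relation $\omega_i\psi_i=0$ holds because $(1-\underline{t}^{\underline{n}^i},\underline{t}^{\underline{n}^i})$ factors $M(\A)$ through $\C^\ast\setminus\{1\}$, whose second cohomology vanishes. The relations $\sum_{H_i\in I}c_i\psi_i=0$ hold because $\sum_{H_i\in I}c_i\underline{n}^i=0$ forces $\prod_{H_i\in I}(\underline{t}^{\underline{n}^i})^{c_i}\equiv 1$, so that $\sum_{H_i\in I}c_i\psi_i=\dd\log 1=0$ already at the level of forms.

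The product relations are the only ones that require geometry. When $\A$ is totally unimodular the layer $W=\bigcap_{H_i\in I}H_i$ is irreducible and the $H_i$ with $i\in I$ are dependent; the asserted relation is the toric counterpart of the Orlik--Solomon relation of that dependency, put into the telescoping product form $\prod(\omega_i-\omega_{i-1}+\dots)$ by the usual algebraic identity, and it is verified by showing that the corresponding product of logarithmic forms is exact on $M(\A)$, as in \cite{DP05}. The correction term $c_i\psi_{i-1}$ is forced by the fact that near $W$ the integral generator $\omega_i$ differs from $\dd\log\ell_i$ — for a local linear form $\ell_i$ cutting out $H_i$ — by an integral combination of the $\psi_j$; rationally these corrections can be absorbed and one recovers the plain De Concini--Procesi relations, but over $\Z$ they must be carried along.

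The substance of the argument is that the classes generate and that the listed relations are complete, i.e.\ that the natural homomorphism $\phi\colon A\to H^\bullet(M(\A);\Z)$ from the ring $A$ abstractly presented by the statement is an isomorphism. I would induct on $|\A|$ by deletion--restriction: deleting $H_n$ leaves a totally unimodular arrangement $\A'$ in $T$, the trace of the remaining hypertori on $H_n\cong(\C^\ast)^{r-1}$ is a totally unimodular arrangement $\A''$, and there is a Gysin long exact sequence relating the integral cohomologies of $M(\A)$, $M(\A')$ and $M(\A'')$. One sets up a parallel exact sequence of the presented rings $A(\A)$, $A(\A')$, $A(\A'')$ compatible with $\phi$ and concludes by a diagram chase and the inductive hypothesis — provided the two sequences genuinely match. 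For this one uses either that the integral cohomology of a totally unimodular complement is torsion-free (so the Gysin sequence splits degreewise and an integral rank count suffices), or else an explicit integral cochain model for $M(\A)$ realizing $A$; the ranks are pinned down, degree by degree, by the Poincaré polynomial of \cite{Looijenga}, which is a specialization of the arithmetic Tutte polynomial and obeys the same deletion--restriction recursion.

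The main obstacle I expect is precisely the coherence of these integral normalizations across all layers: the local hyperplane models at different layers each carry their own $\psi$-corrections, and making the product relations simultaneously consistent — and making the algebraic deletion--restriction sequence land correctly — requires a compatibility along chains $W_1<W_2<\dots$ of layers that is invisible rationally and is exactly what forces the shifted term $c_i\psi_{i-1}$. Verifying this, together with checking that the ideal of relations is saturated enough to give the correct rank in \emph{every} degree rather than only in total, is where the real work lies; once $\phi$ is surjective with source and target free of equal rank in each degree, it is forced to be an isomorphism.
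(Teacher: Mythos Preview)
The paper does not prove this theorem at all: immediately after the statement it writes ``The previous theorem follows from \cite[Theorem 5.2]{DP05} and \cite[Lemma 3.2]{CDDMP}'' and moves on. So there is no in-paper argument to compare your proposal against; the result is imported wholesale from the literature.

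Your sketch is a reasonable reconstruction of how such a result is actually established in those references --- verify the relations at the level of forms, then run a deletion--restriction/Gysin induction matched against an algebraic short exact sequence for the presented ring, with torsion-freeness and the Looijenga rank count closing the loop. That is indeed the shape of the argument in the De Concini--Procesi and CDDMP papers. But as you yourself flag, the places where real work hides are (i) checking that the integral $\psi$-corrections are globally coherent so that the algebraic deletion--restriction sequence for $A(\A)$ is genuinely exact, and (ii) proving torsion-freeness of $H^\bullet(M(\A);\Z)$ in the totally unimodular case (without which a rank count does not force injectivity). Neither of these is handled in your outline beyond naming them, so what you have is a correct plan rather than a proof. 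Since the paper's own ``proof'' is a bare citation, your write-up already goes further than the paper does; to turn it into an actual proof you would have to reproduce the content of \cite[Lemma~3.2]{CDDMP}, which is where those two points are settled.
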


The previous theorem follows from \cite[Theorem 5.2]{DP05} and \cite[Lemma 3.2]{CDDMP}.
The following theorem is \cite[Theorem 6.13]{CDDMP}.

For each ordered subset $S\subseteq \A$ and each connected component $W$ of $\cap_{H \in S} H$ such that $|S|=\rk W$, there exist a differential form $\oomega_{W,S}$ of degree $|S|$ defined in \cite{CDDMP}.
If $S=\{H_i\}$ then the form $\oomega_{H_i,S}$ coincides with $2\omega_i -\psi_i$.

\begin{thm}\label{thm6.13}
Let $\A$ be a central toric arrangement.
The rational cohomology algebra $H^*(M(\A),\mathbb Q)$ is generated by $\oomega_{W,S}$, $\psi_i$ with relations:
\begin{itemize}
\item $\oomega_{W,S} \psi_i=0$ for every $H_i\in S \subseteq \A$,
\item $\oomega_{W,S} \oomega_{V,T}=0$ if $\rk W+\rk V> \rk W\cap V$ and if $\rk W+\rk V= \rk W\cap V$ a relation
\[\oomega_{W,S} \oomega_{V,T}=(-1)^{l(S,T)} \sum_{U \textnormal{ c.c. of } W \cap V} \oomega_{U, S\cup T},\]
where $l(S,T)$ is the parity of the permutation that reorder $S\cup T$.
\item for every $I\subseteq \A $ and $k_i \in \Z$ such that $\sum_{H_i\in I} k_i \underline{n}^i=0$, a relation
\[\sum_{H_i \in I} k_i \psi_i =0,\]
\item for every $I\subseteq \A $, connected component $L$ of $\cap_{H_i \in I} H_i$ and $k_i \in \Z$ such that $\sum_{H_i\in I} k_i \underline{n}^i=0$, a relation
\[\sum_{H_i \in I} \sum_{S,T} (-1)^{|S_{\leq i}|+l(S,T)} c_T \frac{m(S)}{m(S \cup T)} \oomega_{W,S} \prod_{H_i \in T} \psi_i = 0,\]
where $S \sqcup T= I \setminus \{ H_i \}$, $|T|$ is even and $W$ is the connected component of $\cap_{H_i \in S} H_i$ containing $L$.
The integer $m(J)$ is the number of connected components of $\cap_{H_i \in J} H_i$, $c_T$ is the sign $\prod_{H_i \in T} \operatorname{sgn} k_i$ and $|S_{\leq i}|$ is the number of elements of $S$ that precede $H_i$ in the chosen total order of $\A$.
\end{itemize}		
\end{thm}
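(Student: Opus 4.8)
The plan is to compute $H^*(M(\A),\mathbb Q)$ through the Leray spectral sequence of the open inclusion $j\colon M(\A)\hookrightarrow T$, using \Cref{lemma3.2} as the model for the local (hyperplane-like) behaviour. First I would identify the sheaves $R^q j_*\mathbb Q$: near a layer $W$ of rank $q$ the space $M(\A)$ is, locally on $T$, a product of a neighbourhood in $W$ with the complement of an essential central hyperplane arrangement in $\mathbb C^q$ (the linearisation of the hypertori through $W$), so $R^q j_*\mathbb Q$ is constructible, supported on the union of the rank-$q$ layers, locally constant on each with stalk the top Orlik--Solomon group of the local arrangement. Hence the $E_2$-page is a direct sum over layers $W$ of $H^{*-\rk W}(W,\mathbb Q)$ twisted by the associated local Orlik--Solomon module; since every layer is a coset of a subtorus, $H^*(W,\mathbb Q)$ is an exterior algebra, and one reads off that the abutment is generated by classes coming from $H^*(T,\mathbb Q)$ (the bottom filtration step) and by classes globalising the local Orlik--Solomon generators.

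Second, I would construct explicit representatives and prove generation. The pulled-back forms $\psi_i=\frac{1}{2\pi\sqrt{-1}}\dd\log(\underline{t}^{\underline{n}^i})$ generate the image of $H^*(T,\mathbb Q)$, which is the exterior algebra on the $\psi_i$ modulo the linear relations forced by the additive dependencies $\sum_i k_i\underline{n}^i=0$ among the characters. For an ordered flag $S$ cutting out a component $W$ of $\bigcap_{H\in S}H$ with $|S|=\rk W$, the class $\oomega_{W,S}$ is obtained from the rank-one classes $2\omega_i-\psi_i$ (the normalisation that makes the class insensitive to the ambiguities of $\dd\log(1-\underline{t}^{\underline{n}^i})$ and well behaved in the spectral sequence) by multiplication and descent to $W$; these project in $E_\infty$ onto the local Orlik--Solomon generators, so together with the $\psi_i$ they generate $H^*(M(\A),\mathbb Q)$.

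Third comes the identification of the relations. The vanishing $\oomega_{W,S}\psi_i=0$ for $H_i\in S$ holds because $\underline{t}^{\underline{n}^i}\equiv 1$ on $H_i\supseteq W$, so $\psi_i$ restricts to zero on the support of $\oomega_{W,S}$; the product rule for $\oomega_{W,S}\oomega_{V,T}$ is the Orlik--Solomon product inside each local arrangement combined with the decomposition of $W\cap V$ into its connected components $U$, which explains the sum over $U$ and the vanishing as soon as $\rk W+\rk V>\rk(W\cap V)$. The last family is the toric analogue of the Orlik--Solomon circuit relation: given a dependency $\sum_{H_i\in I}k_i\underline{n}^i=0$ and a component $L$ of $\bigcap_{H_i\in I}H_i$, one obtains it by taking the corresponding relation of \Cref{lemma3.2} in the totally unimodular local model, re-expressing it in the generators $\oomega_{W,S}$ and $\psi_i$, and summing over the components lying above $L$; the coefficients $\frac{m(S)}{m(S\cup T)}$ record the ratio between the numbers of components of $\bigcap_{H_i\in S}H_i$ and of $\bigcap_{H_i\in S\cup T}H_i$, while $c_T=\prod_{H_i\in T}\operatorname{sgn}k_i$ and $(-1)^{|S_{\leq i}|+l(S,T)}$ are the usual exterior-algebra bookkeeping. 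Equivalently these are exactly the images of the differentials $d_2,d_3,\dots$ of the spectral sequence, which must die in the abutment.

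Finally I would rule out further relations by a dimension count: Looijenga's formula for the Betti numbers of $M(\A)$ --- the specialisation of the arithmetic Tutte polynomial --- gives the target Poincar\'e polynomial, so since the presented algebra surjects onto $H^*(M(\A),\mathbb Q)$ it suffices to bound its dimension in each degree by a spanning set indexed by no-broken-circuit bases of the underlying matroid together with the connected components they determine. The main obstacle, and the point where the argument is genuinely more delicate than in the hyperplane case, is twofold: establishing the degeneration of the spectral sequence and controlling the multiplicative extension, so that one obtains a presentation of the ring itself and not merely of its associated graded --- this is where the mixed Hodge structure enters, as in \cite{Bibby,DupontHypersurfaces}, identifying the Leray filtration with a shift of the weight filtration --- and, second, the precise accounting of connected components and signs in the circuit relations, that is, checking that summing over the fibre of layers above $L$ produces exactly the factor $\frac{m(S)}{m(S\cup T)}$ rather than something off by a global constant.
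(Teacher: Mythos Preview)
The paper does not give its own proof of this statement: it is quoted verbatim from \cite[Theorem~6.13]{CDDMP} and stated without argument. So there is nothing in the paper to compare your proposal against; the ``paper's proof'' is a citation.

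That said, your outline is broadly the right shape and is in the spirit of the actual proof in \cite{CDDMP} and its antecedents: the Leray spectral sequence for $M(\A)\hookrightarrow T$ with its identification of $R^q j_*\mathbb Q$ via local Orlik--Solomon data (as in \cite{Bibby,DupontHypersurfaces}), degeneration coming from mixed Hodge theory, and the explicit differential-form generators from \cite{DP05}. Two caveats. First, what you call a ``proof idea'' is honest labelling: the hard content is exactly the two points you flag at the end --- passing from the associated graded to the ring itself, and the bookkeeping of components in the circuit relations --- and neither is resolved by the sketch. In \cite{CDDMP} the second point in particular is handled by a careful construction of the forms $\oomega_{W,S}$ (not simply as products of the $2\omega_i-\psi_i$, which would only see the identity component) and an explicit verification of the relations, rather than by abstract spectral-sequence reasoning. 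Second, your explanation of $\oomega_{W,S}\psi_i=0$ (``$\psi_i$ restricts to zero on the support of $\oomega_{W,S}$'') is not quite right as stated: $\psi_i$ is a global closed $1$-form on $T$ and does not vanish on $W$; the relation is rather an algebraic identity among the chosen generators, ultimately coming from $\omega_i\psi_i=0$ in the unimodular model of \Cref{lemma3.2}.
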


\medskip

Let $A= \bigoplus_{n \in \N} A^n$ be a graded-commutative $R$-algebra and consider for each $\alpha \in A^1$ the left multiplication $\delta_\alpha^i: A^i \rightarrow A^{i+1}$. The pair $(A;\delta_\alpha)$ is a complex for each $\alpha \in A^1$.
\begin{definition}
The $k^{\textnormal{th}}$ \textit{resonance variety} of $A$ is
\[ \R^k(A) \defeq \{ \alpha \in A^1 \mid H^k(A, \delta_\alpha) \neq 0\}. \]
The $k^{\textnormal{th}}$ resonance varieties (with coefficients in the domain $R$) for a toric arrangement $\A$ is
\[ \R^k(\A;R) \defeq \R^k ( H^\bigcdot (M(\A);R)).\]
\end{definition}
We will use only the first resonance variety $\R^1(\A,R)$ of a toric arrangement $\A$, where $R$ is the ring $\Z$ or $\Q$.

\section{First example}\label{sect:first_example}
The example that we will expose in this section was already appeared in \cite[Example 7.1]{Pagaria} as a generalization of  \cite[Example 7.3.2]{Callegaro-Delucchi}.
Without a complete description of the cohomology ring, it was not possible to complete the calculation.
Now, using \Cref{thm6.13} and \Cref{lemma3.2} we can perform that computation.

In this section we set $T=(\C^*)^2$.
Consider the arrangements $\A$ and $\A_n^a$ in $T$ defined respectively by the matrices
\[
N = \begin{pmatrix}
1 & 0 & 1 \\ 
0 & 1 & 1
\end{pmatrix}
\textnormal{ and }
N_n^a = \begin{pmatrix}
1 & a & a+1 \\ 
0 & n & n
\end{pmatrix},
\]
where $n$ is a positive integer and $a,a+1$ are relatively prime to $n$.

We use \cite[subsection 2.4.3]{Looijenga} or \cite[Corollary 5.12]{Moci} to calculate the Poincaré polynomials of these arrangements.
The Poincaré polynomial of $M(\A)$ is $1+5t+6t^2$ and that of $M(\A_n^a)$ is $1+5t+(2n+4) t^2$.
The Tutte polynomial of the arithmetic matroid associated with $N$ is $x^2+x+y$, the one associated with $N_n^a$ is $x^2+x+ny+2n-2$.

\begin{thm}\label{thm:main_integer}
Let $n>5$ be a natural number relatively prime to $6$, the arrangements $\A_n^1$ and $\A_n^2$ have isomorphic posets of layers but non isomorphic cohomology algebras with integer coefficients.
\end{thm}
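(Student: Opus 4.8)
The plan is to treat the two parts of the statement separately, the poset of layers being routine and the cohomology being the real content. For the posets, I would compute directly: since $H_1=\{x_1=1\}$, for any $i\neq j$ one has $H_i\cap H_j=\{x_1=1\}\cap\{x_2^{\,n}=1\}=\{(1,\zeta)\mid \zeta^n=1\}$, and the triple intersection is the same set of $n$ points, all independently of $a$. Hence $\S(\A_n^a)$ consists of $\hat 0=T$, the three atoms $H_1,H_2,H_3$, and $n$ coatoms each covering all three atoms; this poset does not depend on $a$, so $\S(\A_n^1)\cong\S(\A_n^2)$.

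For the cohomology, I would first reduce to torsion. The complement $M(\A_n^a)$ is an open subset of $(\C^*)^2$ with the homotopy type of a $2$-complex and with torsion-free $H_1$, so the only torsion in $H^*(M(\A_n^a);\Z)$ sits in $H^2$. The Poincaré polynomials coincide (both equal $1+5t+(2n+4)t^2$), and since the posets of layers are isomorphic, \Cref{thm6.13} gives a graded ring isomorphism $H^*(M(\A_n^1);\Q)\cong H^*(M(\A_n^2);\Q)$. It therefore suffices to exhibit a difference in $\operatorname{tor}H^2(M(\A_n^a);\Z)$ — or, if those groups happen to agree, a difference in the ring structure detected e.g. by the first integral resonance variety $\R^1(\A_n^a;\Z)\subseteq H^1(M(\A_n^a);\Z)\cong\Z^5$.

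The key structural point is that $N_n^a=M_aN$ with $M_a=\bigl(\begin{smallmatrix}1&a\\0&n\end{smallmatrix}\bigr)$, of determinant $n$, so the isogeny $\phi_a\colon T\to T$ with $\phi_a^{*}=M_a$ restricts to a connected $\Z/n$-covering $\phi_a\colon M(\A_n^a)\to M(\A)$: each $\phi_a^{-1}(H_j)$ is irreducible because $M_a\underline{n}^j$ is primitive, using $\gcd(a,n)=\gcd(a+1,n)=1$. One computes the classifying class to be $\chi_a\equiv -a\psi_1+\psi_2\pmod n$ in $H^1(M(\A);\Z/n)$. Since $\A$ is totally unimodular, \Cref{lemma3.2} describes $H^*(M(\A);\Z)$ completely (in particular it is torsion-free), and I would compute $H^*(M(\A_n^a);\Z)=H^*(M(\A);\Z[\Z/n])$ from the Cartan–Leray / Wang long exact sequences of this cover: the torsion of $H^2$ is assembled from $\operatorname{tor}H^2(M(\A);\Z_\rho)$ for the characters $\rho=j\chi_a$, $j=1,\dots,n-1$. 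Each such $\rho$ is pulled back along the monomial map $p\colon M(\A)\to\C^{*}$, $p(x_1,x_2)=x_1^{-a}x_2$, which is a fiber bundle over $\C^{*}\setminus\{1\}$ with fiber $\C^{*}$ minus $2a+2$ points (and a special fiber $\C^{*}$ minus $2a$ points over $1$), with the monodromy around $0$ cyclically permuting the $a$ punctures that are $a$-th roots and the $a+1$ punctures that are $(a+1)$-th roots, and the monodromy around $1$ braiding the three colliding punctures. The Leray spectral sequence of $p$ then reduces everything to $H^1$ of $\C^{*}\setminus\{1\}$ with these explicit rank-$(2a+2)$ coefficients twisted by $\zeta^j$, and torsion in $H^2$ appears precisely when the corresponding monodromy matrices fail to be unimodular.

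Finally, to separate $a=1$ from $a=2$: the families of characters $\{j\chi_a\}_j$ are genuinely different — for $a=1$ they correspond to triples $(-j,j,0)$ in $(\Z/n)^3$, each with a vanishing entry, while for $a=2$ they correspond to $(-2j,j,j)$, which (for $n$ coprime to $6$ and $j\neq 0$) have no vanishing entry and a repeated nonzero one — and correspondingly the fiber of $p$ carries a $2$-cycle versus a $3$-cycle of punctures. Carrying out the computation of the previous paragraph and summing over $j$ yields groups $\operatorname{tor}H^2(M(\A_n^a);\Z)$ whose isomorphism types differ once $n>5$ and $\gcd(n,6)=1$ (coprimality to $6$ is exactly what makes both arrangements well defined and keeps the $3$-cycle's eigenvalues away from the $n$-th roots of unity, and $n>5$ removes the remaining small coincidences); in the degenerate case that the torsion subgroups turn out isomorphic, the same computation pins down $\R^1(\A_n^a;\Z)$ and one checks no graded ring isomorphism matches the two. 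Either way $H^*(M(\A_n^1);\Z)\not\cong H^*(M(\A_n^2);\Z)$. The main obstacle is this last computation: keeping track of how the denominators $m(S)$ in \Cref{thm6.13} interact with the integral lattice and certifying that the invariant extracted for $a=1$ really differs from that for $a=2$ — the finite but delicate bookkeeping that was missing in \cite[Example 7.1]{Pagaria}.
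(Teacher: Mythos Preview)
Your poset-of-layers computation is fine and matches the paper's.

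For the cohomology, however, your primary line of attack cannot succeed: $H^{\bigcdot}(M(\A_n^a);\Z)$ is torsion-free. Indeed $M(\A_n^a)$ is an affine variety of complex dimension $2$, hence has the homotopy type of a $2$-complex; thus $H_2$ is free and $H_1$ is free (so $\Ext^1(H_1,\Z)=0$), whence $H^2$ is free. The paper uses this implicitly when it calls $H^i(M(\A_n^a);\Z)$ a \emph{lattice} inside $H^i(M(\A);\Q)$. All the machinery you assemble---Cartan--Leray for the $\Z/n$-cover, the monomial fibration $p$, the twisted coefficients $\zeta^j$, the dichotomy between character triples $(-j,j,0)$ and $(-2j,j,j)$---is aimed at a torsion subgroup that is zero. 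So your ``degenerate case'' is the only case, and the whole argument collapses onto the single clause ``the same computation pins down $\R^1(\A_n^a;\Z)$ and one checks no graded ring isomorphism matches the two.'' That clause is where the proof actually lives, and you have not supplied it; moreover the Cartan--Leray computation does \emph{not} pin down the resonance variety, which is a question about the cup product $H^1\times H^1\to H^2$, not about twisted $H^2$.

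The paper proceeds directly via $\R^1$. Since $\pi_a^*$ is an isomorphism on $H^1\otimes\Q$ and injective on $H^2$, one has $\R^1(\A_n^a;\Z)=\R^1(\A;\Q)\cap H^1(M(\A_n^a);\Z)$. The paper computes $\R^1(\A;\Q)$ to be exactly five $2$-planes (via the intersection of $\PP(\ker\tilde f)$ with $\Gr(2,H^1)$ in Pl\"ucker coordinates), giving five rank-$2$ sublattices $Q_1^a,\dots,Q_5^a$. Any ring isomorphism $\varphi$ must permute these. Computing the torsion of $H^1/\langle Q_i^a,Q_j^a\rangle$ forces $\varphi$ to preserve the triple $\{Q_1^a,Q_2^a,Q_3^a\}$, and then to preserve the sublattice $L=H^1(T;\Z)\cong\Z^2$. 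The three lines $Q_i^a\cap L$ in $L$ are then an invariant: for $a=1$ they are spanned by $\alpha,\ n\beta+\alpha,\ n\beta+2\alpha$, and one can choose generators of two of them summing into $nL$; for $a=2$ they are spanned by $\alpha,\ n\beta+2\alpha,\ n\beta+3\alpha$, and no pair of generators has this property when $n>5$. This is the actual obstruction, and it is a statement about the \emph{lattice geometry} of the resonance components, not about torsion in cohomology.
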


From \Cref{thm6.13} the two arrangements $\A_n^1$ and $\A_n^2$ have isomorphic cohomology algebras with rational coefficients.
We need a couple of lemmas to prove \Cref{thm:main_integer}.

\begin{lemma}
Let $A$ be a graded-commutative algebra over $\Q$.
The first resonance variety $\R^1(A)$ is a union (possibly infinite) of planes in $A^1$.
\end{lemma}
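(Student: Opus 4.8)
The plan is to reduce the statement to the standard ``isotropic plane'' argument for first resonance varieties. Since $A$ is graded-commutative over a field of characteristic zero, every $\alpha\in A^1$ satisfies $\alpha\cdot\alpha=-\alpha\cdot\alpha$, hence $\alpha^2=0$; thus $\delta_\alpha^{i+1}\circ\delta_\alpha^i$ is multiplication by $\alpha^2=0$ (which is why $(A,\delta_\alpha)$ is a complex), and $\alpha$ is itself a $\delta_\alpha$-cocycle of degree one. Using $A^0=\Q\cdot 1$ (the case at hand, $A$ being the cohomology of the connected space $M(\A)$), we have $\operatorname{im}\delta_\alpha^0=\Q\alpha$, so $H^1(A,\delta_\alpha)=\ker\bigl(\delta_\alpha^1\colon A^1\to A^2\bigr)\big/\Q\alpha$. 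Consequently, for $\alpha\neq 0$ one has $\alpha\in\R^1(A)$ if and only if there is some $\beta\in A^1$, not proportional to $\alpha$, with $\alpha\beta=0$; and $0\in\R^1(A)$ as soon as $A^1\neq 0$.

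Given such a pair $(\alpha,\beta)$, I would then show that the entire plane $P\defeq\Q\alpha\oplus\Q\beta\subseteq A^1$ is contained in $\R^1(A)$. Indeed, for an arbitrary $\gamma=s\alpha+t\beta\in P$, using $\alpha^2=\beta^2=\alpha\beta=0$ together with graded-commutativity (so $\beta\alpha=-\alpha\beta=0$) one computes $\delta_\gamma(\alpha)=s\alpha^2+t\beta\alpha=0$ and $\delta_\gamma(\beta)=s\alpha\beta+t\beta^2=0$, whence $P\subseteq\ker\delta_\gamma^1$. Since $\dim_\Q P=2$ while $\dim_\Q\operatorname{im}\delta_\gamma^0\leq 1$, this forces $H^1(A,\delta_\gamma)\neq 0$, i.e.\ $\gamma\in\R^1(A)$. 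Taking the union over all $\alpha\in\R^1(A)\setminus\{0\}$ of the planes $P$ so produced (each of which passes through $0$) exhibits $\R^1(A)$ as a union of planes, as asserted.

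I do not foresee a genuine obstacle: once $\alpha^2=0$ is in hand the argument is elementary linear algebra, and the only hypothesis that is really used is characteristic zero, which is precisely what makes $\delta_\alpha$ a differential and $\alpha$ a cocycle. For completeness one should note that the argument as written also uses $A^0=\Q$, and that the conclusion should be read with $\dim_\Q A^1\geq 2$ (otherwise $\R^1(A)\subseteq\{0\}$); both conditions hold for the cohomology algebras of toric arrangement complements to which the lemma is applied. The same computation, incidentally, shows $\R^1(A)$ is a cone, a fact subsumed by the stated conclusion.
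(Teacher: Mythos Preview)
Your proposal is correct and follows the same line as the paper's proof: given $\alpha\in\R^1(A)$, find $\beta\notin\Q\alpha$ with $\alpha\beta=0$ and conclude that the whole plane $\langle\alpha,\beta\rangle$ lies in $\R^1(A)$. The paper's argument is simply a terser version of yours; you have filled in the verification that every $\gamma\in\langle\alpha,\beta\rangle$ satisfies $H^1(A,\delta_\gamma)\neq 0$, and you have made explicit the implicit hypotheses $A^0=\Q$ and $\dim_\Q A^1\geq 2$, which the paper silently uses but does not state.
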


\begin{proof}
If $\alpha \in \R^1(A)$, then there exists $\beta \in A^1 \setminus \alpha \Q$ such that $\alpha \beta=0$.
Thus, the plane generated by $\alpha$ and $\beta$ is contained in $\R^1(A)$.
We obtain the desired result from the arbitrariness of $\alpha \in \R^1(A)$.
\end{proof}

We use coordinates $t_1,t_2$ on $T$ and we apply \Cref{lemma3.2}.
The cohomology ring of $M(\A)$ is generated by the closed forms 
\begin{align*}
\omega_1 &= \frac{1}{2 \pi \sqrt{-1}} \dd \log (1-t_1),\\
\omega_2 &= \frac{1}{2 \pi \sqrt{-1}} \dd \log (1-t_2),\\
\omega_3 &= \frac{1}{2 \pi \sqrt{-1}} \dd \log (1-t_1t_2),
\end{align*}
associated with the hypertori $H_1,H_2, H_3$ respectively, together with the forms $\psi_1= \frac{1}{2 \pi \sqrt{-1}} \dd \log (t_1)$ and $\psi_2= \frac{1}{2 \pi \sqrt{-1}} \dd \log (t_2)$ ($\psi_3$ is equal to $\psi_1+\psi_2$).
The relations are:
\begin{equation}\label{eq:rel_in cohomology}
\begin{gathered}
\omega_1\omega_2- \omega_1 \omega_3+ \omega_2\omega_3 - \omega_3\psi_1 = 0, \\
\omega_1 \psi_1=0, \\
\omega_2 \psi_2=0, \\
\omega_3\psi_1+ \omega_3\psi_2=0.
\end{gathered}
\end{equation}

\begin{lemma}\label{lemma:R1(A)}
The first resonance variety $\R^1(\A;\Q)$ of the complement of $\A$ is the union of the following five planes of $H^1(M(\A);\Q)$;
\begin{align*}
P_1 &=\langle \omega_1, \psi_1 \rangle, \\
P_2 &=\langle \omega_2, \psi_2 \rangle, \\
P_3 &=\langle \omega_3, \psi_1+\psi_2 \rangle, \\
P_4 &=\langle \omega_1-\omega_3, \omega_1-\omega_2 - \psi_1 \rangle, \\
P_5 &=\langle \omega_2-\omega_3, \omega_1-\omega_2 + \psi_2 \rangle.
\end{align*}
\end{lemma}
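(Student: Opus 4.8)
The plan is to use the previous lemma to turn the statement into a finite linear-algebra problem, and then to solve that problem directly in the presentation \eqref{eq:rel_in cohomology}. By the previous lemma $\R^1(\A;\Q)$ is a union of planes; more precisely, $\alpha\in\R^1$ if and only if $\alpha$ lies in some $2$-dimensional subspace $P\subseteq H^1(M(\A);\Q)$ on which the cup product vanishes identically (say that such a $P$ is \emph{isotropic}): if $\alpha\beta=0$ with $\beta\notin\alpha\Q$ then $\langle\alpha,\beta\rangle$ is such a plane, and conversely any $\beta\in P\setminus\alpha\Q$ witnesses $\alpha\in\R^1$. Since each $P_i$ is a plane, it suffices to prove that the isotropic planes are exactly $P_1,\dots,P_5$.

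First I would make the ring structure in low degrees explicit. The space $H^1$ has the basis $\omega_1,\omega_2,\omega_3,\psi_1,\psi_2$ displayed above, and eliminating $\omega_1\psi_1$, $\omega_2\psi_2$, $\omega_3\psi_2$ and $\omega_1\omega_2$ by means of the four relations \eqref{eq:rel_in cohomology} exhibits $\{\omega_1\omega_3,\ \omega_2\omega_3,\ \omega_1\psi_2,\ \omega_2\psi_1,\ \omega_3\psi_1,\ \psi_1\psi_2\}$ as a basis of the $6$-dimensional space $H^2$, together with an explicit multiplication table (in particular $\omega_1\omega_2=\omega_1\omega_3-\omega_2\omega_3+\omega_3\psi_1$ and $\omega_3\psi_2=-\omega_3\psi_1$). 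Checking that each $P_i$ is isotropic is then quick: for $P_1,P_2,P_3$ one only needs $\omega_1\psi_1=0$, $\omega_2\psi_2=0$ and $\omega_3(\psi_1+\psi_2)=0$ (squares of degree-one classes vanish automatically over $\Q$), and for $P_4$ one expands $(\omega_1-\omega_3)(\omega_1-\omega_2-\psi_1)$ and simplifies using the first relation of \eqref{eq:rel_in cohomology}; the plane $P_5$ then follows from $P_4$ by the symmetry $t_1\leftrightarrow t_2$, which exchanges $H_1$ and $H_2$, fixes $H_3$, and hence exchanges $P_4$ and $P_5$.

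The heart of the argument is the converse, that every isotropic plane coincides with some $P_i$. Writing $\alpha=\sum_i a_i\omega_i+\sum_i b_i\psi_i$ and $\beta=\sum_i c_i\omega_i+\sum_i d_i\psi_i$ and expanding $\alpha\beta$ in the basis above, the condition $\alpha\beta=0$ becomes six bilinear equations in the $a_i,b_i,c_i,d_i$. The two equations arising from the $\omega_i\omega_j$-coefficients force the triple of $2\times 2$ minors $(a_1c_2-a_2c_1,\ a_1c_3-a_3c_1,\ a_2c_3-a_3c_2)$ to be a scalar multiple of $(1,-1,1)$; equivalently, either the $\omega$-parts of $\alpha$ and $\beta$ span the plane $\{x_1+x_2+x_3=0\}$, or they are linearly dependent. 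In the first case, after a change of basis of $\langle\alpha,\beta\rangle$ I may assume that the $\omega$-parts are $\omega_1-\omega_2$ and $\omega_1-\omega_3$; the remaining four equations then pin down the $\psi$-parts and force $\langle\alpha,\beta\rangle$ to be $P_4$ or $P_5$. In the second case a dimension count — the plane injects into $(\omega\text{-span})\oplus(\psi\text{-span})$, and both summands have dimension at most one, hence exactly one — reduces matters to planes of the shape $\langle u_1\omega_1+u_2\omega_2+u_3\omega_3,\ v_1\psi_1+v_2\psi_2\rangle$ with $u\neq 0$ and $v\neq 0$; now $\alpha\beta=0$ reads $u_1v_2=u_2v_1=u_3(v_1-v_2)=0$, and the solutions with $u,v\neq 0$ are exactly $P_1$, $P_2$ and $P_3$. (Note that a trivial $\omega$-part is impossible, since $\langle\psi_1,\psi_2\rangle$ is not isotropic: $\psi_1\psi_2\neq 0$.)

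I expect the only real obstacle to be bookkeeping rather than ideas: correctly deriving the basis of $H^2$ and the multiplication table from \eqref{eq:rel_in cohomology}, and organizing the case analysis on the $\omega$-minors so that it is visibly exhaustive. The rest is routine linear algebra.
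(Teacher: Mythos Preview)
Your argument is correct and takes a genuinely different route from the paper's. The paper recasts the problem via the Grassmannian: an isotropic plane is a point of $\Gr(2,H^1)\cap\PP(\ker\tilde f)$ inside $\PP(\bigwedge^2 H^1)\simeq\PP^9$, where $\tilde f\colon\bigwedge^2 H^1\to H^2$ is the product map. Since $\dim\ker\tilde f=4$, the linear space $\PP(\ker\tilde f)$ is a $\PP^3$; since $\Gr(2,5)$ has degree equal to the Catalan number $C_3=5$ in the Pl\"ucker embedding, a $\PP^3$ meets it either in five points or in something positive--dimensional. The paper then rules out the latter by an explicit Pl\"ucker--coordinate computation (checked in Sage), so the five exhibited planes are all of them. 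You instead expand $\alpha\beta=0$ into six bilinear equations in coordinates and organise the analysis by the rank of the $\omega$--projection of $\langle\alpha,\beta\rangle$: rank two forces the $\omega$--parts to span $\{x_1+x_2+x_3=0\}$ and pins down $P_4,P_5$; rank at most one, combined with the $\psi_1\psi_2$--coefficient equation (which forces the $\psi$--parts to be dependent as well), reduces to planes $\langle u_1\omega_1+u_2\omega_2+u_3\omega_3,\,v_1\psi_1+v_2\psi_2\rangle$ and yields $P_1,P_2,P_3$. Your approach is more elementary and entirely self--contained, with no appeal to Schubert calculus or computer algebra; the paper's approach, on the other hand, explains conceptually why the count is exactly five and would scale to larger arrangements where a direct case split becomes unwieldy.
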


\begin{proof}
The multiplication map $f: H^1(M(\A)) \otimes H^1(M(\A)) \rightarrow H^2(M(\A))$ is surjective and factors through $\bigwedge^2 H^1(M(\A))$.
The kernel of 
\begin{align*}
\tilde{f}\colon  \bigwedge^2 H^1(M(\A)) & \longrightarrow  H^2(M(\A)) \\
 \alpha \wedge \beta & \longmapsto  \alpha\beta
\end{align*}
has dimension $4=\binom{5}{2}-6$, hence $L \defeq \PP(\ker \tilde{f}) \simeq \PP^3$ is a linear subspace of $\PP(\bigwedge^2 H^1(M(\A))) \simeq \PP^9$.

An element $\alpha \in H^1(M(\A))$ belongs to the first resonance varieties if and only if there exists $\beta \in H^1(M(\A))$ such that $\alpha \beta=0$ in $ H^2(M(\A))$ and $\beta \not \in \C \alpha$.
This implies that $\alpha \wedge \beta$ is in $\ker \tilde{f}$ and so $[\alpha \wedge \beta] $ is in the linear subspace $L$.
Viceversa if $[\gamma]$ belongs to $L$ and is a decomposable tensor (i.e.\ belongs to $\Gr(2,H^1(M(\A)))$) then $[\gamma]= [\alpha \wedge \beta]$ and the plane $\langle \alpha, \beta \rangle$ is contained in the first resonance variety.

Now we prove that the intersection $L \cap \Gr(2,H^1(M(\A)))$ is the disjoint union of five points.
The relations in eq.~\eqref{eq:rel_in cohomology} implies the following factorized equations
\begin{align*}
(\omega_1-\omega_3)( \omega_1-\omega_2 - \psi_1) & = 0, \\
(\omega_2-\omega_3)( \omega_1-\omega_2 + \psi_2) & = 0.
\end{align*}
These equations ensure that the five different points $[P_i]$, $i=1,\dots, 5$ lie in this intersection.
The dimension of the Grassmannian $\Gr(k,V)$ is $k(\dim V -k)$, which in our case is equal to $6$.
Moreover, when $k=2$ its degree coincides with the Catalan number $C_{\dim V-2} $.
The formula for the degree of the Pl\" ucker embedding of the Grassmannian is due to Schubert in 1886, we refer to \cite{GW11}.
Hence $\Gr(2,H^1(M(\A)))$ has degree $5$ and every $\PP^3 \subset \PP^9$ intersects $\Gr(2,5)$ scheme-theoretically in five points (this is the general case) or in a sub-variety of positive dimension.

We exclude the latter case by explicit computation.
Fix the Pl\" ucker coordinates $[x_{ij}]_{1\leq i < j \leq 5}$ of $\PP^9$, where $\{ \omega_1, \omega_2, \omega_3, \psi_1, \psi_2\}$ is the chosen basis of $H^1(M(\A))$.
The coordinates of the five planes -- in lexicographical order $[x_{1,2},x_{1,3},x_{1,4}, \dots ,x_{4,5}]$ -- are:
\begin{align*}
P_1 &= [0,0,1,0,0,0,0,0,0,0], \\
P_2 &= [0,0,0,0,0,0,1,0,0,0], \\
P_3 &= [0,0,0,0,0,0,0,1,1,0], \\
P_4 &= [1,-1,1,0,1,0,0,-1,0,0], \\
P_5 &= [1,-1,0,0,1,0,-1,0,1,0].
\end{align*}
Thus the linear subspace $L$ has equation given by the ideal 
\[I \defeq (x_{15}, x_{24}, x_{45}, x_{12}+x_{13}, x_{13}+x_{23}, x_{13}-x_{34}+x_{35}).\]
The equation of the Grassmannian are given by the Pfaffian of principal minors of size four of a skew-symmetric matrix. Thus the defining ideal is
\begin{align*}
J \defeq (x_{12}x_{34}-x_{13}x_{24}+x_{14}x_{23}, x_{12}x_{35}-x_{13}x_{25}+x_{15}x_{23},\\
x_{12}x_{45}-x_{14}x_{25}+x_{15}x_{24}, x_{13}x_{45}-x_{14}x_{35}+x_{15}x_{34}, \\
x_{23}x_{45}-x_{24}x_{35}+x_{25}x_{34})
\end{align*}
and the sum of the two ideals is
\begin{align*}
I+J=(x_{15}, x_{24}, x_{45}, x_{14}x_{25}, x_{14}x_{35},  x_{25}x_{34}, x_{12}+x_{13},
x_{13}+x_{23}, & \\
 x_{13}-x_{34}+x_{35}, x_{12}x_{34}+x_{14}x_{23}, x_{12}x_{35}-x_{13}x_{25}) &.
\end{align*}
This last ideal is zero dimensional; this computation was done in Sage \cite{sagemath} and by hand.
Therefore, the intersection of the subspace $\PP(\ker \tilde{f})$ with the Grassmannian $\Gr(2,H^1(M(\A)))$ is (scheme theoretically) the union of five points. %$[P_i]$, $i=1,\dots, 5$.
Since we have exhibit five distinct rational points, we obtain that the first resonance variety $\R^1(\A;\Q)$ is the union of the five corresponding planes.
\end{proof}

The map $T \to T$ defined by $(t_1,t_2) \mapsto (t_1, t_1^a t_2^n)$ is a cyclic Galois covering.
For every $n$ and $a$ the above map restricts to a Galois covering $\pi_a \colon M(\A_n^a) \to M(\A)$ with Galois group $\Z/n\Z$.
The map $\pi_a$ induces an inclusion
\[\pi_a^*: H^\bigcdot (M(\A);\Z) \hookrightarrow H^\bigcdot (M(\A_n^a);\Z)\]
of cohomology rings with integer coefficients. %, this inclusion follows from the corresponding one with complex coefficients (see \cite[Section 4.2]{DP05}) together with the fact that the cohomology is torsion free (as shown in \cite{dAD13}).

Since $n$ is coprime with $2$ and $3$, $H^1(M(\A_n^a);\Z)$ has rank five, equal to that of $H^1(M(\A);\Z)$.
Let $\alpha= \frac{1}{2 \pi \sqrt{-1}} \dd \log t_1$ and $\beta= \frac{1}{2 \pi \sqrt{-1}} \dd \log t_2$ be the two canonical generators of $H^1(T;\Z)$ as sub-lattice of $H^1(M(\A_n^a);\Z)$: then the morphism $\pi^*_a$ is
\begin{align*}
\pi_a^*(\psi_1)&= \alpha, \\
\pi_a^*(\psi_2)&= n \beta + a \alpha, \\
\pi_a^*(\omega_i) & =\omega_i \qquad \textnormal{for }i=1,2,3.
\end{align*}

\begin{lemma}
The first resonance variety $\R^1(\A_n^a;\Z)$ is the union of the following five sub-lattices of $H^1(M(\A_n^a);\Z)$:
\begin{align*}
Q_1 &=\langle \omega_1, \alpha\rangle, \\
Q_2 &=\langle \omega_2, n\beta + a\alpha \rangle, \\
Q_3 &=\langle \omega_3, n\beta + (a+1)\alpha \rangle, \\
Q_4 &=\langle \omega_1-\omega_3, \omega_2-\omega_1 + \alpha \rangle, \\
Q_5 &=\langle \omega_2-\omega_3, \omega_1-\omega_2 + n\beta + a\alpha \rangle.
\end{align*}
\end{lemma}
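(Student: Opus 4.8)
The plan is to reduce the whole statement to \Cref{lemma:R1(A)} via the Galois covering $\pi_a\colon M(\A_n^a)\to M(\A)$, and then to do the small amount of extra work needed to upgrade the description from $\Q$ to $\Z$. First I would record the rational picture. Since both $H^1(M(\A);\Q)$ and $H^1(M(\A_n^a);\Q)$ have dimension five and $\pi_a^*$ is injective, $\pi_a^*$ is an isomorphism in degree one; in particular the Galois group $\Z/n\Z$ acts trivially on $H^1(M(\A_n^a);\Q)$, so the cup product of two degree-one classes lies in $H^2(M(\A_n^a);\Q)^{\Z/n\Z}=\pi_a^*\bigl(H^2(M(\A);\Q)\bigr)$. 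As $\pi_a^*$ is injective in degree two as well, $\bigwedge^2\pi_a^*$ carries the multiplication map $\tilde f$ of $M(\A)$ to that of $M(\A_n^a)$, hence it carries $\ker\tilde f$, the linear subspace $L$, and its five intersection points with $\Gr(2,H^1)$ computed in the proof of \Cref{lemma:R1(A)} to the corresponding objects for $\A_n^a$. Therefore $\R^1(\A_n^a;\Q)=\pi_a^*\bigl(\R^1(\A;\Q)\bigr)=\bigcup_{i=1}^5\pi_a^*(P_i)$, and substituting $\pi_a^*(\psi_1)=\alpha$, $\pi_a^*(\psi_2)=n\beta+a\alpha$, $\pi_a^*(\omega_i)=\omega_i$ identifies $\pi_a^*(P_i)$ with $Q_i\otimes\Q$.

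For the inclusion $\bigcup_iQ_i\subseteq\R^1(\A_n^a;\Z)$ I would push the relations of eq.~\eqref{eq:rel_in cohomology}, together with the two factorized relations used in the proof of \Cref{lemma:R1(A)}, through the ring homomorphism $\pi_a^*$; this produces in $H^\bigcdot(M(\A_n^a);\Z)$ exactly the vanishings $v_iw_i=0$, where $Q_i=\langle v_i,w_i\rangle$ (for instance $\omega_1\alpha=0$, $\omega_2(n\beta+a\alpha)=0$, $\omega_3(n\beta+(a+1)\alpha)=0$, $(\omega_1-\omega_3)(\omega_2-\omega_1+\alpha)=0$, $(\omega_2-\omega_3)(\omega_1-\omega_2+n\beta+a\alpha)=0$). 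Then any $\gamma\in Q_i$ is annihilated by some $\gamma'\in Q_i$ that is not a rational multiple of $\gamma$, which forces $H^1\bigl(H^\bigcdot(M(\A_n^a);\Z),\delta_\gamma\bigr)\neq 0$.

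For the reverse inclusion, given $\gamma\in\R^1(\A_n^a;\Z)$ pick $\gamma'\in H^1(M(\A_n^a);\Z)$ with $\gamma\gamma'=0$ representing a nonzero class of $\delta_\gamma$, so $\gamma'\notin\Z\gamma$; after checking that $\gamma,\gamma'$ may be taken $\Q$-linearly independent, $\gamma\wedge\gamma'$ is a nonzero decomposable vector in $\ker\tilde f$, so $[\gamma\wedge\gamma']$ is one of the five points of $L\cap\Gr(2,H^1(M(\A_n^a);\Q))$ found above, whence $\langle\gamma,\gamma'\rangle_\Q=Q_i\otimes\Q$ and $\gamma\in(Q_i\otimes\Q)\cap H^1(M(\A_n^a);\Z)$. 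The point I expect to be the only real obstacle is to show this intersection equals $Q_i$, i.e.\ that each $Q_i$ is a saturated sublattice of $H^1(M(\A_n^a);\Z)$; I would verify this by a short $\gcd$ computation in the basis $\{\omega_1,\omega_2,\omega_3,\alpha,\beta\}$, where it reduces to the hypothesis that $a$ and $a+1$ are coprime to $n$. One must also be slightly careful with imprimitive classes $\gamma$, for which membership in $\R^1(\A_n^a;\Z)$ holds for the trivial reason that $\gamma=k\delta$ gives $\gamma\delta=k\delta^2=0$; this is harmless for \Cref{thm:main_integer}, since a ring isomorphism preserves primitivity and it is the rank-two components $Q_i$, each with a primitive generator, that matter there.
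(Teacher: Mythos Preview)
Your argument is the paper's own proof unpacked. The paper records only the line $\R^1(\A_n^a;\Z)=\R^1(\A;\Q)\cap H^1(M(\A_n^a);\Z)$, appealing to the embedding of $H^i(M(\A_n^a);\Z)$ into $H^i(M(\A);\Q)$ via $\pi_a^*$ and then reading off the $Q_i$ from \Cref{lemma:R1(A)}. You make explicit the two points the paper leaves tacit: that $\pi_a^*$ identifies the kernels of the degree-one multiplication maps over $\Q$ (it is an isomorphism on $H^1\otimes\Q$ and injective on $H^2$, which is all you need; the Galois-invariants remark is correct but unnecessary), and that each $Q_i$ is a saturated sublattice of $H^1(M(\A_n^a);\Z)$, so that $(\pi_a^*P_i\otimes\Q)\cap H^1(M(\A_n^a);\Z)=Q_i$. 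Your $\gcd$ check, reducing to $\gcd(a,n)=\gcd(a+1,n)=1$, is exactly the missing verification.

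You have also caught a genuine imprecision that the paper glosses over. With the definition of $\R^1$ given in the paper, every imprimitive class $\gamma=k\delta$ with $k\ge 2$ lies in $\R^1(\A_n^a;\Z)$ for the trivial reason you describe, and such $\gamma$ need not lie in any $Q_i$; so the literal equality $\R^1(\A_n^a;\Z)=\bigcup_i Q_i$ fails. Neither the paper's statement nor its one-line proof addresses this. Your diagnosis that this is harmless for \Cref{thm:main_integer} is correct: that proof uses only the rank-two components $Q_i$ and the way a ring isomorphism permutes them, so one should read the lemma as describing those components, or equivalently as the assertion restricted to primitive classes.
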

\begin{proof}
For $i=1,2$, the lattice $H^i(M(\A_n^a);\Z)$ is embedded in $H^i(M(\A);\Q)$ and the first resonance variety $R^1(\A_n^a;\Z)$ is the intersection
\[\R^1(\A_n^a;\Z)=\R^1(\A;\Q) \bigcap H^1(M(\A_n^a);\Z). \qedhere\]
\end{proof}

Now we can complete the proof of \Cref{thm:main_integer}.

\begin{proof}[Proof of \Cref{thm:main_integer}]
The posets of layers $\S(\A_n^1)$ and $\S(\A_n^2)$ are isomorphic because they have $3$ connected hypertori that intersect in $n$ points $(1, \zeta_n^i)$ for $i=0, \dots,n-1$ (where $\zeta_n$ is a $n^{\textnormal{th}}$ primitive root of unity).
The Hasse diagram of the posets of layers in the case $n=7$ is represented in \Cref{fig:Hasse_diag}.
\begin{figure}
\centering
\begin{tikzpicture}
\draw \foreach \x in {1,2,...,7}
    {(\x-1,2) node [circle, draw, fill=black, inner sep=0pt, minimum width=4pt,label={[above]$p_\x$}] {}
    \foreach \y in {0,1,2}
    {
      (\x-1,2)   -- (2*\y+1,1)}
    };
\draw (3,0) node [circle, draw, fill=black, inner sep=0pt, minimum width=4pt] {} -- (1,1) node [circle, draw, fill=black, inner sep=0pt, minimum width=4pt] {};
\draw (3,0)-- (3,1) node [circle, draw, fill=black, inner sep=0pt, minimum width=4pt] {};
\draw (3,0)-- (5,1) node [circle, draw, fill=black, inner sep=0pt, minimum width=4pt] {};
\draw (3,0) node [below]{$T$};
\draw (1,1) node [left]{$H_1$};
\draw (3,1) node [below]{$H_2$};
\draw (5,1) node [right]{$H_3$};
\end{tikzpicture}
\caption{The Hasse diagram of the poset of layers of $\A_7^1$ which coincides with the one of $\A_7^2$}\label{fig:Hasse_diag}
\end{figure}
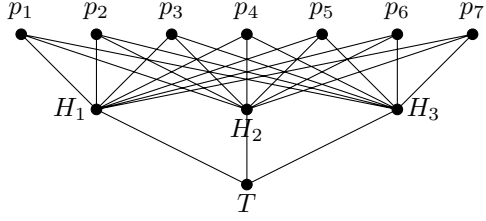
Suppose that there exists an isomorphism $\varphi: H^\bigcdot(M(\A^1_n);\Z) \rightarrow H^\bigcdot(M(\A^2_n);\Z)$; then $\varphi$ must map $\R^1(\A_n^1;\Z)$ isomorphically into $\R^1(\A_n^2;\Z)$.
Furthermore, $\varphi$ sends each component $Q_i^1$ into another component $Q_{f(i)}^2$.
For each $(i,j)$, consider the cardinality $c^a (i,j)$ of the torsion subgroup of $H^1(M(\A^a_n);\Z)/\langle Q_i^a,Q_j^a\rangle$ for $a=1,2$.
The value of $c^a(i,j)$ is $n$ when $(i,j)=(1,2),(1,3),(2,3),(4,5)$ and $1$ otherwise, both for $a=1$ and $a=2$.%$\A_n^1$ and $\A_n^2$.
Thus, $\varphi$ maps $Q_1^1,Q_2^1,Q_3^1$ into $Q_1^2,Q_2^2,Q_3^2$ in some order.
We define, for any sub-lattice $\Lambda$ of $H^1(M(\A_n^a);\Z)$, its radical $\Rad \Lambda$:
\[ \Rad \Lambda = \{x \in H^1(M(\A_n^a);\Z) \mid \exists \, n \in \N_+ \textnormal{ such that } nx \in \Lambda  \} .\]
For $a=1,2$ the following equality holds
\[ H^1((\C^*)^2;\Z)= \Rad \left( \bigcap_{1\leq i < j \leq 3} \langle Q_i^a,Q_j^a \rangle \right),\]
hence $\varphi$ preserves the sub-lattice $L \defeq H^1((\C^*)^2;\Z)=\langle \alpha, \beta \rangle$.
Now we claim that there is no linear map $\varphi_{|L}: L \rightarrow L$ that sends the three sub-lattices 
\[\{ Q_1^1 \cap L, Q_2^1 \cap L, Q_3^1 \cap L\} \]
into $\{ Q_1^2 \cap L, Q_2^2 \cap L, Q_3^2 \cap L\} $ in some order.
The three one-dimensional lattices are 
$Q_1^1 \cap L=\langle \alpha \rangle$, $Q_2^1 \cap L=\langle n\beta+\alpha \rangle$, $Q_3^1 \cap L= \langle n\beta+2\alpha \rangle$
%$\alpha\Z, (n\beta+\alpha)\Z, (n\beta+2\alpha)\Z$
for the arrangement $\A_n^1$ and the lattices
$ Q_1^2 \cap L=\langle \alpha \rangle$, $ Q_2^2 \cap L=\langle n\beta+2\alpha \rangle$, $ Q_3^2 \cap L=\langle n\beta+3\alpha \rangle$
%$\alpha\Z, (n\beta+2\alpha)\Z, (n\beta+3\alpha)\Z$
for the arrangement $\A_n^2$.
In the case $a=1$ we can find generators for two of those lattices (e.g. $-\alpha$ and $n\beta+\alpha$) such that their sum belongs to the sub-lattice $nL$.
This property does not hold for the arrangement $\A_n^2$: indeed $\pm \alpha \pm (n\beta+2\alpha), \pm \alpha \pm (n\beta+3\alpha), \pm (n\beta+2\alpha) \pm (n\beta+3\alpha)$ are not in $nL$ (here we use $n\neq 5$).
Thus, we conclude that the map $\varphi$ cannot exist.
\end{proof}

\section{Second example}\label{sect:second_example}
The following example is constructed by looking for two toric arrangements with the following properties.
The underline matroid is not a modular matroid.
The two toric arrangements are coverings of the same toric arrangement with non cyclic Galois group.
The smallest example of such arrangements must have rank at least three and four hypertori.

Consider the three matrices
\[ N=\begin{pmatrix}
1 & 1 & 1 & 3 \\ 
0 & 5 & 0 & 5 \\ 
0 & 0 & 5 & 5
\end{pmatrix},   \quad
N'=\begin{pmatrix}
1 & 4 & 1 & 6 \\ 
0 & 5 & 0 & 5 \\ 
0 & 0 & 5 & 5
\end{pmatrix},  \quad
 N''=\begin{pmatrix}
1 & 0 & 0 & 1 \\ 
0 & 1 & 0 & 1 \\ 
0 & 0 & 1 & 1
\end{pmatrix}.\]
These integer matrices describe three central toric arrangements $\A$, $\A'$ and $\A''$ in $T=(\C^*)^3$.
Both $\A$ and $\A'$ are Galois coverings of $\A''$ with Galois group $\Z/5\Z \times \mathbb{Z}/5\mathbb{Z}$.

Let $([4],\rk,m)$ be the arithmetic matroid defined by $\rk (S)= \min (|S|,3)$ and by
\[ m(S)=\begin{cases} 1 & \textnormal{if }|S|\leq 1 \\
5 & \textnormal{if }|S|=2 \\
25 & \textnormal{if }|S|\geq 3 
\end{cases} .\]
Let $\mathcal{M}$ be the matroid over $\Z$ defined by
\[ \mathcal{M}(S)=\begin{cases} 
\Z^3 & \textnormal{if }|S|= 0 \\
\Z^2 & \textnormal{if }|S|= 1 \\
\Z \times \Z/5\Z & \textnormal{if }|S|=2 \\
\Z/5\Z  \times \Z/5\Z  & \textnormal{if }|S|\geq 3 
\end{cases} .\]
\begin{thm}\label{thm:main_combin}
The matrices $N$ and $N'$ are representations of the arithmetic matroid $([4],\rk,m)$ and of the matroid $\mathcal{M}$ over $\Z$.
Moreover, the posets $\S(\A)$ and $\S(\A')$ are not isomorphic.
\end{thm}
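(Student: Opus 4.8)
The representability claims are a direct Smith‑normal‑form computation, which I would organise around two remarks: every column of $N$ and of $N'$ is primitive, and in both matrices $\underline n^4=\underline n^1+\underline n^2+\underline n^3$. The first gives $\rk(S)=|S|$, $m(S)=1$ and $\Z^3/\langle N[S]\rangle\cong\Z^{3-|S|}$ for $|S|\le 1$. The second gives $\langle\underline n^i,\underline n^j,\underline n^k\rangle=\langle\underline n^1,\underline n^2,\underline n^3\rangle$ for every $3$‑subset $\{i,j,k\}$, and likewise for $[4]$; moreover $\langle\underline n^1,\underline n^2,\underline n^3\rangle=\langle(1,0,0),(0,5,0),(0,0,5)\rangle$ for both $N$ and $N'$, whose cokernel in $\Z^3$ is $\Z/5\Z\oplus\Z/5\Z$, so that $\rk(S)=3$, $\mathcal M(S)\cong(\Z/5\Z)^2$ and $m(S)=25$ for $|S|\ge 3$. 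For the six rank‑$2$ subsets a short reduction (using $\underline n^4=\underline n^1+\underline n^2+\underline n^3$ when $4\in S$) brings the $3\times 2$ matrix to Smith form $\operatorname{diag}(1,5)$, giving $\rk(S)=2$, $\mathcal M(S)\cong\Z\oplus\Z/5\Z$, $m(S)=5$. This settles the two matroid assertions.

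For the posets I would describe their bottom three ranks explicitly. The hypertori are connected (primitive characters) and every $3$‑subset is a basis, so the layers of rank $\le 1$ are $T,H_1,\dots,H_4$; the layers of rank $3$ are the points of $\bigcap_{i=1}^4 H_i$, which by the identity $\langle\underline n^1,\underline n^2,\underline n^3\rangle=\langle(1,0,0),(0,5,0),(0,0,5)\rangle$ above equals, for both $\A$ and $\A'$, the subgroup $G\defeq\{1\}\times\mu_5\times\mu_5$ of the $25$ points $(1,\zeta_5^a,\zeta_5^b)$, which I identify with $(\Z/5\Z)^2$ via $(1,\zeta_5^a,\zeta_5^b)\leftrightarrow(a,b)$; and the layers of rank $2$ are the connected components of the $H_i\cap H_j$, five ($=m(\{i,j\})$) for each of the six pairs. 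These $30$ layers are pairwise distinct, since a rank‑$2$ layer lying in three of the $H_k$ would lie in a codimension‑$3$ intersection, which is impossible; hence each rank‑$2$ layer lies in exactly two hypertori and determines the pair $\{i,j\}$. The crux is the incidence between ranks $2$ and $3$: since $G=\bigcap_{i=1}^4 H_i\subseteq H_i\cap H_j$, $G$ meets the connected $1$‑dimensional subtorus $(H_i\cap H_j)^{\circ}\cong\C^*$ in a subgroup $G_{ij}$ of order $5$ (it cannot be all of $G$, which is not cyclic), and the five components of $H_i\cap H_j$ meet $G$ precisely in the five cosets of $G_{ij}$. So each pair $\{i,j\}$ contributes a parallel class of lines in the affine plane $G\cong(\Z/5\Z)^2$, of direction $G_{ij}=G\cap(H_i\cap H_j)^{\circ}$; and $G_{ij}$ is read off from a generator of the rank‑$1$ annihilator $\langle\underline n^i,\underline n^j\rangle^{\perp}\subseteq\Z^3$ by intersecting the corresponding one‑parameter subgroup with $G$.

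Performing these six small computations in each torus, one finds that for $\A$ the subgroups $G_{ij}$ are the six distinct lines through $0$ in $(\Z/5\Z)^2$, so that every affine line of $G$ is the trace of a rank‑$2$ layer; whereas for $\A'$ they take only four distinct values — the pairs $\{1,2\}$ and $\{3,4\}$ give the same direction, as do $\{1,4\}$ and $\{2,3\}$ — and in particular $\langle(1,1)\rangle$ is not among them. Consequently, in $\S(\A)$ any two distinct rank‑$3$ layers $p,q$ have a common lower bound of rank $2$, namely the component of some $H_i\cap H_j$ whose trace on $G$ is the unique affine line through $p$ and $q$; whereas in $\S(\A')$ the rank‑$3$ layers $p=(1,1,1)$ and $q=(1,\zeta_5,\zeta_5)$, corresponding to $0$ and $(1,1)$ in $G$, have no common rank‑$2$ lower bound, because a rank‑$2$ layer of $\A'$ meets $G$ in a full affine line, which would have to be $\langle(1,1)\rangle$. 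Since rank and the property ``every two maximal elements of rank $3$ have a common lower bound of rank $2$'' are invariants of a poset, this yields $\S(\A)\not\cong\S(\A')$. The only genuinely laborious step is the determination of the twelve subgroups $G_{ij}$ — equivalently, of the connected components of the six subtorus intersections in each torus — but each is an elementary computation over $\Z$ and over $\Z/5\Z$.
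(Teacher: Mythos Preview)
Your argument is correct. For the arithmetic-matroid and $\Z$-matroid assertions you do exactly what the paper does---reduce everything to the Smith normal forms of $N[S]$ and $N'[S]$---and your two organising remarks (primitive columns, $\underline n^{\,4}=\underline n^{\,1}+\underline n^{\,2}+\underline n^{\,3}$) simply make that verification explicit.

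For the non-isomorphism of the posets you take a genuinely different route. The paper's invariant is a property~(P) involving only the four atoms and their pairwise joins: for a partition $\{i,j\}\sqcup\{k,l\}=[4]$, whether every component of $H_i\cap H_j$ meets every component of $H_k\cap H_l$; this is checked by writing out and solving the corresponding systems in $T$. Your invariant lives one rank higher: you model the rank-$2$/rank-$3$ incidence as affine geometry over $\mathbb F_5$, with the $25$ top layers as the points of $(\Z/5\Z)^2$ and each pair $\{i,j\}$ contributing the parallel class of direction $G_{ij}=G\cap(H_i\cap H_j)^\circ$, and then ask whether any two top layers share a rank-$2$ lower bound, i.e.\ whether all six directions of $\PP^1(\mathbb F_5)$ are realised. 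Both arguments rest on the same underlying data---the six subgroups $G_{ij}$ are exactly the kernels $\ker\pi_I$ the paper later tabulates when distinguishing the rational cohomology algebras---but your packaging is more structural: it explains the discrepancy (six directions for $\A$, only four for $\A'$, with $\{1,2\},\{3,4\}$ and $\{1,4\},\{2,3\}$ collapsing), yields an invariant that is manifestly intrinsic to the ranked poset, and anticipates the kernel computation the paper carries out afterwards.
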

\begin{proof}
The first assertion follows from the Smith normal form of $N[S]$ and of $N'[S]$, the matrices obtained from $N$ and $N'$ by taking only the columns indexed by $S$.
The second one follows from \Cref{lemma:not_iso} below.
\end{proof}

The Poincaré polynomials of the complements $M(\A)$ and $M(\A')$ coincide with
\[P(t)=P'(t)=110 t^3+41 t^2+7 t+1.\]
The one of $M(\A'')$ is $P''(t)=14 t^3+17 t^2+7 t+1$.
The Tutte polynomial of the arithmetic matroid $([4],\rk,m)$ is $x^3+x^2+25x+25y+48$ and the one associated with $N''$ is $x^3+x^2+x+y$.

Define $a \vee b$ as the set of all least upper bound of $a,b$ in the poset of layers.
Consider the following property
\begin{equation}
%\exists \, \textnormal{ distinct } i,j,k,l \in [4] 
\exists \, \{i,j\}\cup \{k,l\} = [4] 
\, \forall \, a \in i \vee j,\, \forall \, b \in  k \vee l \, ( a\vee b \neq \emptyset) \tag{P}. \label{eq:prop_poset}
\end{equation}
In other words, the property \eqref{eq:prop_poset} for $\S(\A)$ (or for $\S(\A')$) says that there exists a choice of two hypertori $H_i,H_j$ in $\A$ (resp. in $\A'$) such that every connected component of $H_i \cap H_j$ intersects every connected component of $H_k \cap H_l$.
\begin{lemma}\label{lemma:not_iso}
The property \eqref{eq:prop_poset} holds for $\S(\A)$ but not for $\S(\A')$.
\end{lemma}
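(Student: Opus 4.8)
The plan is to realise the relevant part of $\S(\A)$ and $\S(\A')$ through the coverings onto the totally unimodular arrangement $\A''$, and to reduce property~\eqref{eq:prop_poset} to an elementary statement about six lines in $\mathbb{F}_5^2$.

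First I would pin down the coverings. The morphism $\phi\colon T\to T$, $(x_1,x_2,x_3)\mapsto(x_1,\,x_1x_2^5,\,x_1x_3^5)$, satisfies $\phi^{-1}(H_i'')=H_i$ for all $i$, and $\phi'(x_1,x_2,x_3)=(x_1,\,x_1^4x_2^5,\,x_1x_3^5)$ satisfies $\phi'^{-1}(H_i'')=H_i'$; both have kernel $K=\{1\}\times\mu_5\times\mu_5$. Since $\A''$ is totally unimodular with $\bigcap_{i=1}^4 H_i''=\{(1,1,1)\}$, the top layer of $\S(\A)$ is $\bigcap_{i=1}^4 H_i=\phi^{-1}(1,1,1)=K$, a set of $25$ points, and likewise for $\S(\A')$. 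Fixing a primitive fifth root of unity $\zeta$, I identify $K$ with $\mathbb{F}_5^2$ via $(1,\zeta^a,\zeta^b)\leftrightarrow(a,b)$. Note also that each $H_i$ (resp. $H_i'$) is a single layer, its defining column being primitive, so that $i\vee j$ makes sense and equals the set of connected components of $H_i\cap H_j$.

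Next I would analyse the rank-two layers. For a pair $\{i,j\}\subseteq[4]$ the intersection $H_i\cap H_j$ is a disjoint union of $5$ cosets of its identity component $Z_{ij}$, a one-dimensional subtorus through $(1,1,1)$ (here $m(\{i,j\})=5$ for the arithmetic matroid of $N$ and of $N'$); since $K\subseteq H_i\cap H_j$ and $Z_{ij}\cong\C^*$ admits only cyclic finite subgroups, $K\cap Z_{ij}$ is a line $K_{ij}\subseteq\mathbb{F}_5^2$, and the $5$ components of $H_i\cap H_j$ cut out on $K$ exactly the $5$ cosets of $K_{ij}$. Concretely $Z_{ij}$ is cut out by the saturation of the sublattice of $\mathbb{Z}^3$ spanned by columns $i,j$ of the defining matrix; I would write a primitive cocharacter $s\mapsto(s^{\ell_1},s^{\ell_2},s^{\ell_3})$ onto $Z_{ij}$, normalise it so that $\ell_1\equiv 0\pmod 5$, and read off $K_{ij}=\{(\ell_2 k,\ell_3 k)\mid k\in\mathbb{F}_5\}$. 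Carrying this out over all six pairs, for $N$ and for $N'$, is the computational heart of the proof; I expect the outcome that for $\A$ the six lines $K_{ij}$ are pairwise distinct (hence exhaust $\PP^1(\mathbb{F}_5)$), while for $\A'$ one has $K'_{ij}=K'_{kl}$ for every splitting $[4]=\{i,j\}\sqcup\{k,l\}$ into two pairs.

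Finally I would assemble the argument. For a pair-splitting $[4]=\{i,j\}\sqcup\{k,l\}$ and components $a\subseteq H_i\cap H_j$, $b\subseteq H_k\cap H_l$, one has $a\cap b\subseteq\bigcap_{m=1}^4 H_m=K$, so $a\cap b$ is the intersection of a coset of $K_{ij}$ with a coset of $K_{kl}$ inside $\mathbb{F}_5^2$; this is nonempty for all $a,b$ precisely when $K_{ij}\neq K_{kl}$, since two distinct lines span $\mathbb{F}_5^2$ while equal lines give disjoint parallel cosets. As $a\vee b\neq\emptyset\iff a\cap b\neq\emptyset$, property~\eqref{eq:prop_poset} for a poset of layers amounts to the existence of a pair-splitting with $K_{ij}\neq K_{kl}$. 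For $\A$ this holds, e.g.\ via $\{1,2\}\sqcup\{3,4\}$, so \eqref{eq:prop_poset} holds for $\S(\A)$. For $\A'$ every pair-splitting has $K'_{ij}=K'_{kl}$, call this common line $\Lambda$; then the component of $H_i\cap H_j$ through $(1,1,1)$ (whose trace on $K$ is $\Lambda$) is disjoint from the component of $H_k\cap H_l$ through any point of $K\setminus\Lambda$ (whose trace is a nonzero coset of $\Lambda$), so \eqref{eq:prop_poset} fails for $\S(\A')$. The only real difficulty is the twelve saturation-and-cocharacter computations of the previous paragraph — in particular keeping track of the fivefold ambiguity in the cocharacters so that the residues modulo $5$ are correct — together with recognising the coincidence pattern of the $K'_{ij}$.
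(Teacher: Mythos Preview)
Your reduction of property~\eqref{eq:prop_poset} to a question about the six lines $K_{ij}\subseteq\mathbb{F}_5^2$ via the Galois covering over $\A''$ is correct and more conceptual than the paper's approach, which simply writes down and solves the relevant systems of equations case by case. The identification $a\cap b=(a\cap K)\cap(b\cap K)$ as an intersection of cosets of $K_{ij}$ and $K_{kl}$ inside $K\cong\mathbb{F}_5^2$ is valid, and the equivalence ``$a\cap b\neq\emptyset$ for all $a,b$ iff $K_{ij}\neq K_{kl}$'' is exactly right.

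The gap is in your expected outcome for $\A'$. Carrying out your cocharacter recipe for the pair $\{2,4\}$ (columns $(4,5,0)$ and $(6,5,5)$ of $N'$) gives the primitive cocharacter $(5,-4,-2)$, hence $K'_{24}=\{(1,s,s^{3}):s^{5}=1\}=\langle e_1+3e_2\rangle$, whereas $K'_{13}=\langle e_1\rangle$; these are distinct lines (this agrees with \Cref{table:kernels} later in the paper). So it is \emph{not} true that $K'_{ij}=K'_{kl}$ for every pair-splitting: only $\{1,2\}\sqcup\{3,4\}$ and $\{1,4\}\sqcup\{2,3\}$ give equal lines, while $\{1,3\}\sqcup\{2,4\}$ does not. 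By your own criterion, property~\eqref{eq:prop_poset} as literally written (with an existential quantifier on the pair-splitting) therefore \emph{holds} for $\S(\A')$ as well, and your argument does not establish the lemma as stated. Note that the paper's proof checks only the splitting $\{1,2\}\sqcup\{3,4\}$ for $\A'$ but all three splittings for $\A$; this is the correct workload if the outer quantifier in~\eqref{eq:prop_poset} is read as universal rather than existential. Under that reading your approach goes through --- you only need one splitting with $K'_{ij}=K'_{kl}$, and there are two --- but you should carry out the computation rather than assert the stronger (and false) claim about all three splittings.
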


\begin{proof}
We first discuss the poset $\S(\A')$.
Consider $(i,j,k,l)=(1,2,3,4)$, there are five possible joins $1\vee 2$ that correspond to the five layers 
\[ a_\mu :  \begin{cases}
x=1 \\ 
y=\mu \\ 
\end{cases} ,\]
where $\mu$ runs over all the fifth roots of unity.
Analogously, the joins of $3$ and $4$ are the five layers
\[ b_\zeta : \begin{cases}
x=z^{-5} \\ 
y=\zeta z^{5} \\ 
\end{cases} ,\]
where $\zeta$ runs over all the fifth roots of unity.
A join  $a_\mu \vee b_\zeta$ exists if and only if the system
\begin{equation}
\label{eq:system}
 \begin{cases}
x=1 \\ 
y=\mu \\
z^5=1 \\ 
y=\zeta z^{5}
\end{cases}. 
\end{equation}
admits a solution.
If $\zeta= \mu$, then the system has five solutions, otherwise there are no solutions.
In particular, the property \eqref{eq:prop_poset} does not hold for the poset $\S(\A')$.

The following case by case analysis shows that the three systems for the arrangement $\A$ analogous to \eqref{eq:system} have always a unique solution:
\[\begin{cases}
x=1 \\ 
y=\mu \\ 
xz^5=1 \\
xyz^{3}=\zeta  
\end{cases},\quad
\begin{cases}
x=1 \\ 
z=\mu \\
xy^5=1 \\ 
x^2y^3z=\zeta
\end{cases},\quad
\begin{cases}
x=1 \\ 
yz=\mu \\
xy^5=1 \\ 
y=\zeta z
\end{cases}.\qedhere
 \] 
\end{proof}

\begin{prop}
The spaces $M(\A)$ and $M(\A')$ have non-isomorphic cohomology algebras with rational coefficients, i.e.\
\[ H^\bigcdot(M(\A);\Q) \not \simeq H^\bigcdot(M(\A');\Q).\]
\end{prop}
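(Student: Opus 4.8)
The plan is to argue by contradiction. Suppose $\varphi\colon H^\bullet(M(\A);\Q)\to H^\bullet(M(\A');\Q)$ is an isomorphism of graded $\Q$-algebras; I will extract from it a constraint on $\S(\A)$ and $\S(\A')$ that contradicts \Cref{lemma:not_iso}. First, as in \Cref{lemma:R1(A)} and using \Cref{lemma3.2}, one checks that for $\A$, and likewise for $\A'$, the kernel of the cup product $\bigwedge^2 H^1(M(\A);\Q)\to H^2(M(\A);\Q)$ is $4$-dimensional, spanned by the $\oomega_{H_i}\wedge\psi_i$ with $\oomega_{H_i}=2\omega_i-\psi_i$; hence $\R^1(\A;\Q)$ is the union of the four pairwise-transverse planes $P_i=\langle\oomega_{H_i},\psi_i\rangle$, and $\varphi$, preserving the irreducible components of $\R^1$, induces a permutation $\sigma\in S_4$ with $\varphi(P_i)=P'_{\sigma(i)}$. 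The defining matrices satisfy $\psi_1+\psi_2+\psi_3=\psi_4$, whence $\langle\psi_i\rangle=P_i\cap\sum_{j\neq i}P_j$; thus the toric subspace $W_1:=\sum_i\langle\psi_i\rangle=\operatorname{im}(H^1(T;\Q)\to H^1(M(\A);\Q))$ is canonically determined by the algebra, $\varphi(W_1)=W_1'$, and $\varphi$ descends to an isomorphism $\bar\varphi\colon\bar A\to\bar A'$ of the quotients $\bar A:=H^\bullet(M(\A);\Q)/(W_1)$ carrying $\langle\oomega_{H_i}\rangle\subseteq\bar A^1$ to $\langle\oomega_{H'_{\sigma(i)}}\rangle$.

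The core of the argument is to show that the incidence between the rank-$2$ and rank-$3$ layers — which is what distinguishes $\S(\A)$ from $\S(\A')$ — is visible in $\bar A$. By \Cref{thm6.13}, modulo $(W_1)$ the algebra $\bar A$ is generated by the $\oomega_{H_i}$, by the classes $\oomega_{W,\{i,j\}}$ of the $30$ rank-$2$ layers, and by the classes $\oomega_{L,S}$ attached to ordered bases $S$ through each of the $25$ rank-$3$ layers $L=\bigcap_i H_i$, with relations $\oomega_{H_k}\oomega_{W,\{i,j\}}=0$ for $k\in\{i,j\}$, $\oomega_{H_k}\oomega_{W,\{i,j\}}=\pm\sum_{U}\oomega_{U,\{i,j,k\}}$ (sum over the points $U$ of $W$ lying on $H_k$) for $k\notin\{i,j\}$, and one local relation $\sum_k\pm\,\oomega_{L,[4]\setminus\{k\}}=0$ at each $L$ (one uses here that every triple $H_i\cap H_j\cap H_k$ equals the $25$-point set $\bigcap_m H_m$, for both arrangements). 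From this one computes: the classes $\oomega_{W,\{i,j\}}$ form a basis of $\bar A^2$; $\bar A^3$ is $75$-dimensional and splits as $S_{\{1,2,3\}}\oplus S_{\{1,2,4\}}\oplus S_{\{1,3,4\}}$ with $S_T:=\langle\oomega_{L,T}:L\rangle$; and, for each pair $\{i,j\}$, the subspace $X_{\{i,j\}}:=\operatorname{Ann}_{\bar A^2}(\oomega_{H_i})\cap\operatorname{Ann}_{\bar A^2}(\oomega_{H_j})$ equals $\langle\oomega_{W,\{i,j\}}:W\subseteq H_i\cap H_j\rangle$, hence is intrinsic once the lines $\langle\oomega_{H_i}\rangle$ are known. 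Writing $\pi_{ij}$ for the partition of the $25$ rank-$3$ layers induced by the components of $H_i\cap H_j$, a bookkeeping of the product formulas through the local relations yields, for every partition $[4]=\{i,j\}\sqcup\{k,l\}$,
\[\dim\bigl(\bar A^1\cdot(X_{\{i,j\}}+X_{\{k,l\}})\bigr)=20-\#\{\text{blocks of }\pi_{ij}\vee\pi_{kl}\}.\]

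Since $\bar\varphi$ matches each $\langle\oomega_{H_i}\rangle$ with $\langle\oomega_{H'_{\sigma(i)}}\rangle$, it matches $X_{\{i,j\}}$ with $X'_{\{\sigma(i),\sigma(j)\}}$ and respects products; hence the multiset of the three numbers $\dim(\bar A^1\cdot(X_{\{i,j\}}+X_{\{k,l\}}))$, as $\{i,j\}\sqcup\{k,l\}$ runs over the partitions of $[4]$, is an isomorphism invariant of $H^\bullet(M(\A);\Q)$. For $\A$ all three relevant pairs $\pi_{ij},\pi_{kl}$ are transverse, so this multiset is $\{19,19,19\}$; for $\A'$, by the computation underlying \Cref{lemma:not_iso}, two of the three pairs satisfy $\pi_{ij}=\pi_{kl}$ and one is transverse, so the multiset is $\{15,15,19\}$. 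Therefore $H^\bullet(M(\A);\Q)\not\simeq H^\bullet(M(\A');\Q)$. The main obstacle is the second paragraph: one must carry out the finite linear-algebra computation in $\bar A$ carefully — by hand or in Sage \cite{sagemath} — the delicate point being the bookkeeping of the $25$ local relations, which couple the summands $S_T$ of $\bar A^3$ and are exactly what translates the layer incidences into a statement about multiplication in the algebra.
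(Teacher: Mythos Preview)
Your overall strategy matches the paper's: show that any isomorphism must preserve the decomposition of $\R^1$ into four planes, deduce that it preserves $H^1(T;\Q)$, pass to the quotient $\bar A=H^\bullet/(H^1(T;\Q))$ (the paper's $S^\bullet$), and then find a multiplicative invariant of $\bar A$ that separates the two arrangements. The identification of the four planes, of the lines $\langle\psi_i\rangle=P_i\cap\sum_{j\neq i}P_j$, and of the induced lines $\langle\oomega_{H_i}\rangle\subset\bar A^1$ is correct; the annihilator description $X_{\{i,j\}}=\operatorname{Ann}(\oomega_{H_i})\cap\operatorname{Ann}(\oomega_{H_j})$ and the dimension formula $\dim\bigl(\bar A^1\cdot(X_{\{i,j\}}+X_{\{k,l\}})\bigr)=20-\#\{\text{blocks of }\pi_{ij}\vee\pi_{kl}\}$ check out in both extreme cases (join with $5$ blocks gives $15$, transverse join with $1$ block gives $19$).

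Where you differ from the paper is only in the last step. The paper simply computes, by machine, the rank of the full multiplication $S^1\otimes S^2\to S^3$ and finds $51$ versus $43$. You instead isolate, for each partition $[4]=\{i,j\}\sqcup\{k,l\}$, the subspace $X_{\{i,j\}}+X_{\{k,l\}}$ and compare the multisets $\{19,19,19\}$ versus $\{15,15,19\}$. Your invariant is finer and has the virtue of being transparently linked to property~\eqref{eq:prop_poset} via the kernels in \Cref{table:kernels}; the paper's single rank is coarser but requires no bookkeeping of the local relations. Note that the difference $51-43=8$ in the paper equals the difference $57-49$ of the sums of your multisets, which is consistent.

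One small correction: you invoke \Cref{lemma3.2}, but that result is stated only for totally unimodular arrangements, and $\A,\A'$ are not (every pair of hypertori meets in five components). The statement you actually need --- that in degree two the only relations among products from $H^1$ are $\oomega_{H_i}\psi_i=0$, so $\ker(\bigwedge^2H^1\to H^2)$ is $4$-dimensional --- is exactly what the paper extracts from \Cref{thm6.13}; cite that instead.
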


\begin{proof}
Suppose that an isomorphism $\varphi: H^\bigcdot(M(\A);\Q)  \rightarrow H^\bigcdot(M(\A');\Q)$ exists.
We claim that $\varphi(H^\bigcdot(T;\Q))=H^\bigcdot(T;\Q)$ where $T$ is the ambient torus.
The proof of the claim is analogous to the one of \Cref{lemma:R1(A)}.
The first resonance variety of $M(\A)$ and $M(\A')$ are the union of the four planes
\begin{align*}
Q_1 &=\langle \omega_1, \alpha\rangle, & 
Q_1' &=\langle \omega_1, \alpha\rangle, \\
Q_2 &=\langle \omega_2, 4\alpha + 5\beta \rangle, &
Q_2' &=\langle \omega_2, \alpha + 5\beta \rangle, \\
Q_3 &=\langle \omega_3, \alpha + 5\gamma \rangle, &
Q_3' &=\langle \omega_3, \alpha + 5\gamma \rangle, \\
Q_4 &=\langle \omega_4, 3\alpha + 5\beta + 5\gamma \rangle, &
Q_4' &=\langle \omega_4, 6\alpha + 5\beta + 5\gamma \rangle,
\end{align*}%\todo{cite CDDMP relazioni $\omega\psi=0$}
since the unique relations in cohomology of degree two are $\omega_i \psi_i=0$ (see \Cref{thm6.13}).
Thus there exists a bijection $f:[4] \rightarrow [4] $ such that $\varphi$ sends $Q_i$ into $Q_{f(i)}'$, for $i=1,\dots,4$.
Since $H^1(T;\Q)= \bigcap_{i=1}^4 \langle Q_{j} \rangle_{j\neq i}$ in  $H^1(M(\A);\Q)$  and $H^1(T;\Q)= \bigcap_{i=1}^4 \langle Q_{j}' \rangle_{j\neq i}$ in  $H^1(M(\A');\Q)$, the map $\varphi$ preserves the subspace $H^\bigcdot(T;\Q)$.
Consider now the quotients $S^{\bigcdot}=H^\bigcdot(M(\A);\Q)/(H^1(T;\Q))$ and $S^{\prime \bigcdot}=H^\bigcdot(M(\A');\Q)/(H^1(T;\Q))$.
The multiplication map $S^1 \times S^2 \rightarrow S^3$ has rank $51$, instead the map $S^{\prime 1} \times S^{\prime 2} \rightarrow S^{\prime 3}$ has rank $43$.
The rank of the two multiplication maps can be calculated with a computer.
Therefore the map $\varphi$ cannot be an isomorphism.
\end{proof}

The difference between the rank of $S^1 \times S^2 \rightarrow S^3$ and $S^{\prime 1} \times S^{\prime 2} \rightarrow S^{\prime 3}$ can be explained intuitively.

For every $I \subseteq [4]$, the set of connected components of $\cap_{i \in I} H_i$ has a natural group structure, induced by the ambient torus.
We call this group $\LG(I)$.
Moreover given $I\subset J \subseteq [4]$, there exists a natural group homomorphism $\pi \colon \LG(J) \to \LG(I)$ that maps a connected component $W$ to the unique connected component of $\bigcap_{i \in I} H_i$ containing $W$.
In our case, since $\bigcap_{j\neq i} H_j= \bigcap_{j \in [4]} H_j$ for all $i \in [4]$, the map $\LG([4]) \to \LG([4] \setminus \{i\})$ is the identity.
Call $\pi_{i,j} \colon \LG([4]) \to \LG(\{i,j\})$ the canonical projection.

Given $I$ and $J$ of cardinality two, there exists an isomorphism $\varphi_I^J$ such that the following diagram commutes
\begin{center}
\begin{tikzcd}
\LG([4]) \arrow[r, "\pi_I"] \arrow[d,"\pi_J"] & \LG(I) \arrow[ld, "\varphi_I^J", dashed] \\
\LG(J) 
\end{tikzcd}
\end{center}
if and only if $\ker \pi_I=\ker \pi_J$.
We compute all these kernels both for $\A$ and $\A'$ and we report it in \Cref{table:kernels}, where $e_1$ and $e_2$ are two generators of $\LG([4])\simeq \Z/5\Z \times \Z/5\Z$.

\begin{table}
\caption{For every set $I\subset [4]$, $|I|=2$, we describe the kernel of $\pi_I$ and of $\pi'_I$.}
\centering
\label{table:kernels}
\begin{tabular}{c|c|c}
$I$ & $ \ker \pi_I$ & $\ker \pi'_I$ \\ 
\hline 
$\{1,2\}$ & $\langle e_2 \rangle$ & $\langle e_2 \rangle$ \\ 
$\{1,3\}$ & $\langle e_1 \rangle$ & $\langle e_1 \rangle$ \\ 
$\{1,4\}$ & $\langle e_1-e_2 \rangle$ & $\langle e_1-e_2 \rangle$ \\ 
$\{2,3\}$ & $\langle 4e_1-e_2 \rangle$ & $\langle e_1-e_2 \rangle$ \\
$\{2,4\}$ & $\langle 2e_1-e_2 \rangle$ & $\langle 3e_1-e_2 \rangle$ \\ 
$\{3,4\}$ & $\langle 3e_1-e_2 \rangle$ & $\langle e_2 \rangle$ \\ 
\end{tabular}
\end{table}

From \Cref{thm6.13}, we have that $S^\bigcdot$ is generated by the image of $\oomega_i:= \oomega_{H_i,\{i\}}$ for $i\in [4]$, of $\oomega_{a,I}$ for $|I|=2$ and $a \in \LG(I)$, and of $\oomega_{b, [4] \setminus \{i\}}$ for $i\in [4]$ and $b\in \LG([4])$.
The linear relations are 
\[ \sum_{i=1}^4 (-1)^i \oomega_{b,[4]\setminus \{i\}}=0\]
for each $b\in \LG([4])$.
The product $S^1 \times S^2 \to S^3$ is defined by
\[ \oomega_i \oomega_{a,\{j,k\}} = (-1)^{l(\{i\},\{j,k\})} \sum_{b \in \pi^{-1}_{j,k}(a)} \oomega_{b,\{i,j,k\}}.\]
The analogous definitions and formulas hold for the arrangement $\A'$.
In the algebra $S^{\prime \bigcdot}$ the following relations hold for $a \in \LG'(\{1,2\})$ and $c \in \LG'(\{1,4\})$:
\begin{align*}
&(\oomega_1'-\oomega_2'+ \oomega_3' - \oomega_4')(\oomega_{a,\{1,2\}}' +\oomega_{\varphi_{1,2}^{3,4}(a),\{3,4\}}')=0, \\
&(\oomega_1' + \oomega_2' - \oomega_3' - \oomega_4')(\oomega_{c,\{1,4\}}' + \oomega_{\varphi_{1,4}^{2,3}(c),\{2,3\}}')=0,
\end{align*}
since $\ker \pi'_{1,2}= \ker \pi'_{3,4}$ and $\ker \pi'_{1,4} = \ker \pi'_{2,3}$.
These equations give ten independent relations, the corresponding relations in the algebra $S^\bigcdot$ are only two:
\begin{align*}
&(\oomega_1 - \oomega_2+ \oomega_3 - \oomega_4) \bigg( \sum_{ a\in \LG(\{1,2\})} \oomega_{a,\{1,2\}} + \sum_{b \in \LG(\{3,4\})} \oomega_{b, \{3,4\}} \bigg) =0, \\
&(\oomega_1 + \oomega_2 - \oomega_3 - \oomega_4) \bigg( \sum_{c\in \LG(\{1,4\})} \oomega_{c,\{1,4\}} + \sum_{ d \in \LG(\{2,3\})} \oomega_{d,\{2,3\}} \bigg) =0,
\end{align*}
since $\ker \pi_{1,2} \neq \ker \pi_{3,4}$ and $\ker \pi_{1,4} \neq \ker \pi_{2,3}$.

By \cite[Theorem E]{Delucchi-Riedel}, the $G$-semimatroids described by $N$ and $N'$ are different.

\printbibliography

%\section*{References}
%\bibliography{Controesempi}{}
%\bibliographystyle{amsalpha}
\bigskip\bigskip
\end{document}